\numberwithin{equation}{section}
\newtheorem{Cor}[equation]{Corollary}
\newtheorem{Lem}[equation]{Lemma}
\newtheorem{Prop}[equation]{Proposition}
\newtheorem{Thm}[equation]{Theorem}
\theoremstyle{remark}
\newtheorem{Def}[equation]{Definition}
\newtheorem{Not}[equation]{Notation}
\newtheorem{Exa}[equation]{Example}
\newtheorem{Exas}[equation]{Examples}
\newtheorem{Hyp}[equation]{Hypothesis}
\newtheorem{Rem}[equation]{Remark}
\newcommand{\nc}{\newcommand}
\nc{\dmo}{\DeclareMathOperator}
\dmo{\Ab}{Ab}
\dmo{\Add}{Add}
\dmo{\coker}{coker}
\dmo{\Der}{D}
\dmo{\Grmod}{Mod_{\sbull}}
\dmo{\Hom}{Hom}
\dmo{\Id}{Id}
\dmo{\im}{im}
\dmo{\Ind}{Ind}
\dmo{\Ker}{Ker}
\dmo{\modname}{mod}%
\dmo{\Mod}{Mod}
\dmo{\opname}{op}
\dmo{\Proj}{Proj}%
\dmo{\Qcoh}{Qcoh}
\dmo{\SH}{SH}
\dmo{\smallh}{h}%
\dmo{\smallperf}{perf}
\dmo{\Spc}{Spc}
\dmo{\Spec}{Spec}
\dmo{\stab}{stab}
\dmo{\Stab}{Stab}
\dmo{\supp}{supp}
\dmo{\fpname}{fp}
\dmo{\yoneda}{h}
\nc{\ababs}{{\sl ab absurdo}}
\nc{\AKB}{\bat{A}(\cat{K};\cat{B})}%
\nc{\ala}{{\sl \`a la}\ }
\nc{\bat}[1]{\bar{\cat{#1}}}
\nc{\bbF}{\mathbb{F}}
\nc{\bbQ}{\mathbb{Q}}
\nc{\boneda}{\bar\yoneda}
\nc{\calF}{\mathcal{F}}
\nc{\calH}{\mathcal{H}}
\nc{\calO}{\mathcal{O}}
\nc{\cat}[1]{\mathscr{#1}}
\nc{\cA}{\cat{A}}
\nc{\cB}{\cat{B}}
\nc{\cC}{\cat{C}}
\nc{\cJ}{\cat{J}}
\nc{\cK}{\cat{K}}
\nc{\cL}{\cat{L}}
\nc{\colim}{\mathop{\mathrm{colim}}}
\nc{\cP}{\cat{P}}
\nc{\cQ}{\cat{Q}}
\nc{\cT}{\cat{T}}
\nc{\Displ}{\displaystyle}
\nc{\Db}{\Der^{\mathrm{b}}}
\nc{\Dperf}{\Der^{\smallperf}}
\nc{\EB}{E_{\cB}}
\nc{\EBp}{E_{\cB'}}
\nc{\eg}{{\sl e.g.}}
\nc{\fp}{^{\fpname}}
\nc{\gp}{\mathfrak{p}}
\nc{\GGrmod}{\textrm{-}\Grmod}
\nc{\hB}{\boneda_{\cB}}
\nc{\hEB}{\hat E_{\cB}}
\nc{\hEBp}{\hat E_{\cB'}}
\nc{\Homcat}[1]{\Hom_{\cat #1}}
\nc{\hook}{\hookrightarrow}
\nc{\ie}{{\sl i.e.}\ }
\nc{\into}{\mathop{\rightarrowtail}}
\nc{\inv}{^{-1}}
\nc{\kk}{k}
\nc{\Locab}[1]{\langle#1\rangle}
\nc{\Locat}[1]{\Locab{\cat{#1}}}
\nc{\loccit}{{\sl loc.\ cit.}}
\nc{\Mid}{\,\big|\,}
\nc{\mK}{\mmod\cat{K}}
\nc{\MK}{\MMod\cat{K}}
\nc{\mmod}{\modname\kern-0.1em\text{-}}%
\nc{\MMod}{\Mod\kern-0.1em\text{-}}%
\nc{\onto}{\mathop{\twoheadrightarrow}}
\nc{\op}{^{\opname}}
\nc{\oursetminus}{\smallsetminus}
\nc{\potimes}[1]{^{\otimes #1}}
\nc{\sbull}{{\scriptscriptstyle\bullet}}
\nc{\SET}[2]{\big\{\,#1\Mid#2\,\big\}}
\nc{\Spch}{\Spc^{\smallh}}
\nc{\SpchK}{\Spch(\cat{K})}
\nc{\SpcK}{\Spc(\cat{K})}
\nc{\sstab}{\,\text{--}\stabname}%
\nc{\supph}{\supp^{\smallh}}
\nc{\tensid}{$\otimes$-ideal}
\nc{\too}{\mathop{\longrightarrow}\limits}
\nc{\unit}{\mathbb{1}}
\nc{\vcorrect}[1]{{\vphantom{\vbox to #1em{}}}}
\begin{document}


\title[nilpotence theorems via homological residue fields]{nilpotence theorems \\ via homological residue fields}
\author{Paul Balmer}
\date{2019 January 22}

\address{Paul Balmer, Mathematics Department, UCLA, Los Angeles, CA 90095-1555, USA}
\email{balmer@math.ucla.edu}
\urladdr{http://www.math.ucla.edu/$\sim$balmer}

\begin{abstract}
We prove nilpotence theorems in tensor-triangulated categories using suitable Gabriel quotients of the module category, and discuss examples.
\end{abstract}

\subjclass[2010]{18E30 (20J05, 55U35)}
\keywords{Homological residue field, tt-geometry, module category, nilpotence}

\thanks{Author supported by NSF grant~DMS-1600032.}

\maketitle


\section{Introduction}


For the average Joe, and the median Jane, \emph{the} Nilpotence Theorem refers to a result in stable homotopy theory, conjectured by Ravenel and proved by Devinatz, Hopkins and Smith in their famous work on chromatic theory~\cite{DevinatzHopkinsSmith88,HopkinsSmith98}. One form of the result says that a map between finite spectra which gets annihilated by all Morava $K$-theories must be tensor-nilpotent. Under Hopkins's impetus~\cite{Hopkins87}, these ideas soon expanded beyond topology. Neeman~\cite{Neeman92a} in commutative algebra and Thomason~\cite{Thomason97} in algebraic geometry proved nilpotence theorems for maps in derived categories of perfect complexes, using ordinary residue fields instead of Morava $K$-theories. Benson, Carlson and Rickard~\cite{BensonCarlsonRickard97} led the charge into yet another area, namely modular representation theory of finite groups, where the appropriate `residue fields' turned out to be shifted cyclic subgroups, and later $\pi$-points~\cite{FriedlanderPevtsova07}. As further areas kept joining the fray, expectations rose of a unified treatment applicable to every tensor-triangulated category in Nature. In this vein, Mathew~\cite[Thm.\,4.14\,(b)]{Mathew17a} proved an abstract nilpotence theorem via $E_\infty$-rings in $\infty$-categories over the field~$\mathbb{Q}$. However, this rationality assumption is a severe restriction, incompatible with the chromatic joys of topological Joe and the positive characteristic tastes of modular Jane. Here, we prove abstract nilpotence theorems, integrally and without $\infty$-categories. For instance, \Cref{Cor:flagship} says:
\begin{Thm}
\label{Thm:nil-compact}%
Let $f\colon x\to y$ be a morphism in an essentially small, rigid tensor-triangulated category~$\cat{K}$. If we have $\boneda(f)=0$ for every homological residue field $\boneda\colon \cat{K}\to\bat{A}$, then there exists $n\ge 1$ such that $f\potimes{n}=0$ in~$\cat{K}$.
\end{Thm}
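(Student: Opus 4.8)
The plan is to transport the statement into the tensor-abelian category $\cC:=\mmod\cat{K}$ of coherent $\cat{K}$-modules, where the homological residue fields are, by construction, the composites $\boneda_{\cat B}=q_{\cat B}\circ\smallh$ of the restricted Yoneda (homological) functor $\smallh\colon\cat{K}\to\cC$ with the Gabriel quotients $q_{\cat B}\colon\cC\to\cC/\cat B=\bat A$ at the homological primes $\cat B\in\SpchK$. I would use freely the basic properties of this set-up: $\smallh$ is a homological, faithful, symmetric monoidal functor, so that $\smallh(\unit)=\unit$ and $\smallh(a\otimes b)=\smallh(a)\otimes\smallh(b)$; the tensor product on $\cC$ is right exact in each variable; and each $q_{\cat B}$ is exact and monoidal. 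Faithfulness of $\smallh$ (evaluate $\smallh(g)=0$ at an identity) together with monoidality gives that $f\potimes{n}=0$ in $\cat{K}$ if and only if $\smallh(f)\potimes{n}=0$ in $\cC$. So it suffices to prove the contrapositive in $\cC$: if $\phi:=\smallh(f)$ satisfies $\phi\potimes{n}\neq 0$ for all $n\ge 1$, then $q_{\cat B}(\phi)\neq 0$ for some homological prime $\cat B$, which is precisely the statement $\boneda_{\cat B}(f)\neq 0$.

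The next step is to replace the morphism $\phi$ by an object, namely $J:=\im(\phi)\in\cC$. Because $\otimes$ is right exact, the power $(\smallh(x)\onto J)\potimes{n}$ of the canonical epimorphism is again an epimorphism, so $\im(\phi\potimes{n})$ is a quotient of $J\potimes{n}$; hence $\phi\potimes{n}\neq 0$ forces $J\potimes{n}\neq 0$ for all $n$. On the other hand, since $q_{\cat B}$ is exact, $q_{\cat B}(\phi)=0$ if and only if $q_{\cat B}(J)=0$, that is, if and only if $J\in\cat B$. So everything reduces to the following purely algebraic assertion inside $\cC$: \emph{if $J\in\cC$ satisfies $J\potimes{n}\neq 0$ for every $n\ge 1$, then some homological prime does not contain $J$.}

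To prove this I would argue exactly as one finds primes avoiding a multiplicatively closed set in commutative algebra. The family $S:=\{\unit,\,J,\,J\potimes{2},\,\dots\}$ is closed under $\otimes$ and, by hypothesis, contains no zero object. A union of a chain of Serre $\otimes$-ideals is again a Serre $\otimes$-ideal, and it avoids $S$ as soon as each member does (membership of a single object in the union is membership in some member), so Zorn's Lemma yields a Serre $\otimes$-ideal $\cat B$ maximal among those disjoint from $S$. One then checks that such a $\cat B$ is prime: if $M\otimes M'\in\cat B$ while $M,M'\notin\cat B$, then by maximality the Serre $\otimes$-ideals generated by $\cat B$ together with $M$, resp.\ with $M'$, each meet $S$, say they contain $J\potimes{a}$, resp.\ $J\potimes{b}$; tensoring these inclusions and using that $\cat B$ is a $\otimes$-ideal forces $J\potimes{a+b}\in\cat B$, contradicting $\cat B\cap S=\emptyset$. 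Since $J=J\potimes{1}\notin\cat B$, this $\cat B$ is a homological prime not containing $J$, whence $\boneda_{\cat B}(f)=q_{\cat B}(\phi)\neq 0$, as required. (If preferred, one may first normalize to the case $x=\unit$ by replacing $f$ with its mate $\unit\to x^{\vee}\otimes y$: this affects neither $\otimes$-nilpotence of $f$ nor the vanishing of any $\boneda(f)$, since $\boneda(x)$ is dualizable in $\bat A$ and monoidal functors preserve mates; but the argument above does not need this reduction.)

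The substance of the proof lies not in these formal manipulations but in the structural input invoked in the first paragraph: that the restricted Yoneda functor $\cat{K}\to\mmod\cat{K}$ is faithful and symmetric monoidal with right-exact target tensor, that the homological residue fields are exactly its composites with the Gabriel quotients at the homological primes, and that the Serre $\otimes$-ideals produced by the Zorn argument genuinely belong to $\SpchK$. All of this rests on the analysis of $\mmod\cat{K}$ developed earlier and, crucially, on the rigidity of $\cat{K}$. Granting that machinery, the deduction above is short; the one point that still requires care is the interaction of the ``generated Serre $\otimes$-ideal'' operation with $\otimes$ in the primality check, which is the tensor-abelian counterpart of the corresponding step for $\SpcK$ itself.
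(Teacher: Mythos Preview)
There is a genuine gap in the Zorn step. A \emph{homological prime} in this paper is, by definition, a \emph{maximal} Serre $\otimes$-ideal of $\mK$, not a prime one (see \Cref{Def:homol-fields}). Your Zorn argument only produces a Serre $\otimes$-ideal $\cB$ that is maximal \emph{among those disjoint from $S=\{J\potimes{n}\}$}, and your follow-up argument attempts to show such a $\cB$ is prime in the ideal-theoretic sense. Even granting that, ``prime'' does not imply ``maximal'', so you have not placed $\cB$ in $\SpchK$. The paper faces exactly this difficulty and resolves it in \Cref{Lem:exist}: a Serre $\otimes$-ideal maximal among those avoiding a $\otimes$-multiplicative set $\cat S$ is genuinely maximal, \emph{provided} $\cat S\subset\cat K$ consists of rigid objects. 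The proof uses that for $s\in\cat S$ the kernel of $\hat\eta_s\colon\hat\unit\to\hat s^\vee\otimes\hat s$ is annihilated by~$\hat s$. Your $J=\im(\smallh(f))$ is not representable and not rigid in $\mK$, so this mechanism is unavailable for your~$S$.

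The primality step itself is also not settled. ``Tensoring the inclusions'' works for ideals in a commutative ring because the ideal generated by $I\cup\{m\}$ is $I+Rm$; but the Serre $\otimes$-ideal generated by $\cB\cup\{M\}$ involves iterated subquotients, and $J$ is not $\otimes$-flat, so there is no evident reason why $J\potimes{a}\in\langle\cB,M\rangle$ and $J\potimes{b}\in\langle\cB,M'\rangle$ with $M\otimes M'\in\cB$ should force $J\potimes{a+b}\in\cB$. You flag this as ``the one point that still requires care'', but it is not clear it can be carried out at all for non-flat~$J$. The paper's route avoids both obstacles: after reducing to $f\colon\hat\unit\to F$ via rigidity, it takes $\cB_0=\{M\in\cA\fp\mid f\potimes{n}\otimes M=0\text{ for some }n\}$ (a Serre $\otimes$-ideal thanks to $\otimes$-flatness of $F=\hat y$), uses a multiplicative set $\cat S\subset\cK$ of \emph{rigid} objects so that \Cref{Lem:exist} yields an honest homological prime $\cB\supseteq\cB_0$, and then a direct kernel argument exploiting flatness shows $Q_\cB(f)$ is a monomorphism, contradicting the hypothesis $Q_\cB(f)=0$.
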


We need to explain the \emph{homological residue fields} that appear in this statement. Their purpose is to encapsulate the key features of Morava $K$-theories, ordinary residue fields, shifted cyclic subgroups, etc, from an abstract point of view. In other words, when first meeting a tensor-triangulated category~$\cat{K}$, we would like to extract its `residue fields' without knowing intimate details about~$\cat{K}$, as we are used to extract residue fields $\kappa(\gp)=R_\gp/\gp R_\gp$ from any commutative ring~$R$, without much knowledge about~$R$ beyond its propensity to harbor prime ideals $\gp\in\Spec(R)$. We investigated this question of `tensor-triangular fields' in the recent joint work~\cite{BalmerKrauseStevenson19}, with Krause and Stevenson. Although the present article can be read independently, we refer to that prequel for motivation, background, justification, and a couple of lemmas. In retrospect, our nilpotence theorems further validate the ideas introduced in~\cite{BalmerKrauseStevenson19}.

In a nutshell, as we do not know how to produce residue fields within triangulated categories, we consider instead homological tensor-functors to \emph{abelian} categories
\[
\boneda=\boneda_{\cB}\colon \ \cat{K}\ \hook \ \mK \ \onto \ (\mK)/\cat{B}
\]
composed of the Yoneda embedding $\yoneda\colon \cat{K}\hook \mK$ into the Freyd envelope of~$\cK$ (\Cref{Rem:Freyd}) followed by the Gabriel quotient $Q_{\cB}\colon \mK\onto (\mK)/\cat{B}$ with respect to any \emph{maximal} $\otimes$-ideal Serre subcategory~$\cat{B}$. Thus \Cref{Thm:nil-compact} can be rephrased as follows:

\smallbreak
\textit{If a morphism $f\colon x\to y$ in~$\cK$ is annihilated by $\boneda_{\cB}\colon\cat{K}\to (\mK)/\cB$ for every maximal Serre $\otimes$-ideal $\cB\subset\mK$, then $f$ is $\otimes$-nilpotent in~$\cK$.}
\smallbreak

At first, it might be counter-intuitive to only invoke \emph{maximal} $\otimes$-ideals~$\cB$, instead of some kind of more general `\emph{prime}' $\otimes$-ideals of $\mK$, but we explain in \Cref{sec:homol-fields} why this notion covers all points of the triangular spectrum~$\SpcK$ of~$\cat{K}$, not just the closed points. We also explain in \Cref{Rem:fields-vs-fields} how the above homological residue fields correspond to the local constructions proposed in~\cite[\S\,4]{BalmerKrauseStevenson19}.

As a matter of fact, in the examples, there exist alternative formulations of the nilpotence theorem. And the same holds here. Most notably, if our triangulated category $\cat{K}$ sits inside a `big' one, $\cat{K}\subset\cat{T}$, as the compact objects $\cat{K}=\cat{T}^c$ of a so-called `rigidly-compactly generated' tensor-triangulated category~$\cat{T}$ (\Cref{Rem:big-T}), we expect a nilpotence theorem for maps $f\colon x\to Y$ with compact source $x\in\cat{K}$ but \emph{arbitrary} target~$Y$ in~$\cat{T}$. This flavor of nilpotence theorem is \Cref{Cor:nil-big}.

In order to handle such generalizations, we consider the big Grothen\-dieck category $\cat{A}:=\MK$ of \emph{all} right $\cat{K}$-modules (\Cref{Not:A}), not just the subcategory of finitely presented ones that is the Freyd envelope $\cat{A}\fp=\mK$. When~$\cK=\cT^c$, the big category~$\cat{T}$ still admits a so-called `restricted-Yoneda' functor $\yoneda\colon\cT\to \MK$ (\Cref{Rem:big-T}). Every maximal Serre $\otimes$-ideals $\cB\subset\cat{A}\fp$ generates a localizing (Serre) $\otimes$-ideal $\Locat{B}$ of~$\cat{A}$ and we can consider the corresponding `big' Gabriel quotient $\bat{A}:=\cat{A}/\Locat{B}$. Composing with restricted-Yoneda, we obtain a homological $\otimes$-functor $\boneda_{\cB}\colon \cat{T} \too \bat{A}$ on the `big' category~$\cT$, extending the one on~$\cat{K}$\,:
\[
\xymatrix@R=1em{
\cK =\cT^c \ \ar@{^(->}^-{\yoneda}[r] \ar@{}[d]|-{\bigcap}
 \ar@{-<} `u/1em[r]`/1em[rr]^-{\displaystyle\boneda_{\cB}} [rr]
& \cat{A}\fp=\mK \ar@{->>}[r]^-{Q_{\cB}} \ar@{}[d]|-{\bigcap}
& \bat{A}\fp=(\mK)/\cB \ar@{}[d]|-{\bigcap}
\\
\cT \ar[r]^-{\yoneda}
 \ar@{-<} `d/1em[r]`/1em[rr]_-{\displaystyle\boneda_{\cB}} [rr]
& \cat{A}=\MK \ar@{->>}[r]^-{Q_{\cB}}
& \bat{A}=(\MK)/\Locat{B} \,.\!\!
}
\]
Thanks to~\cite[Prop.\,A.14]{BalmerKrauseStevenson17app}, the image $\yoneda(Y)$ of every object $Y$ in the big category~$\cT$ remains \emph{$\otimes$-flat} in the module category~$\cA=\MK$, meaning that the functor $\yoneda(Y)\otimes-\colon\cA\to \cA$ is exact. In fact, this $\otimes$-flatness plays an important role in the proof of the nilpotence theorem. In particular, \Cref{Cor:main} tells us:
\begin{Thm}
\label{Thm:nil-flat-intro}%
Let $f\colon \yoneda(x)\to F$ be a morphism in $\cA=\MK$, for $x\in\cat{K}$. Suppose that the $\cK$-module $F$ is $\otimes$-\emph{flat} and that $Q_\cB(f)=0$ in $\bat{A}=\cat{A}/\Locat{B}$, for every maximal Serre $\otimes$-ideal~$\cB\subset\mK$. Then $f$ is $\otimes$-nilpotent in~$\MK$.
\end{Thm}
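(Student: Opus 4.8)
The plan is to reduce the statement to a single "maximal" Gabriel quotient by a local-to-global argument over the spectrum, and then to exploit $\otimes$-flatness of $F$ to transport the vanishing back to $\MK$.

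First I would set up the support-theoretic framework. For the $\cK$-module $\cok(f)$ (the cokernel of $f\colon\yoneda(x)\to F$ in $\cA=\MK$), one considers its "homological support", namely the set of maximal Serre $\otimes$-ideals $\cB\subset\mK$ — equivalently, by the discussion in \Cref{sec:homol-fields}, the points of $\SpcK$ — at which $\cok(f)$ does not vanish under $Q_\cB$. The hypothesis $Q_\cB(f)=0$ for all such $\cB$ says that $\yoneda(x)\to F$ becomes a split epi (indeed an iso onto a direct summand, since $\yoneda(x)$ is projective in $\mK$, hence its image behaves well) after every Gabriel quotient; more precisely $Q_\cB(\cok(f))$ receives a zero map from $Q_\cB(\yoneda(x))$, which forces a complementary statement about the image of $f$. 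I would phrase the key point as: $Q_\cB(f)=0$ means $Q_\cB(F)$ is a quotient of $Q_\cB(\cok(f))$, so that the "coimage" part is killed everywhere.

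The heart of the argument is then the following chain. Using $\otimes$-flatness of $F$, tensoring the map $f$ with itself $n$ times is controlled by tensoring $\yoneda(x)$ with $F^{\otimes(n-1)}$, and the relevant obstruction lives in $\Hom_{\cA}(\yoneda(x)\potimes{n}, F\potimes{n})$. I would argue that $f\potimes{n}=0$ is detected by the support of an associated finitely presented module (here one uses that $\yoneda(x)$ is compact/finitely presented, so the relevant Hom-object, or rather the module $\yoneda(x)\otimes\cok(f)$, is finitely presented and $\otimes$-flatness keeps the higher tensor powers well-behaved). The finitely presented module $\yoneda(x)\otimes\cok(f)$ has empty homological support by hypothesis — it vanishes under every maximal Serre $\otimes$-ideal $\cB$ — and the crucial input from \Cref{sec:homol-fields} (the analogue for $\mK$ of the fact that a finitely presented object with empty support is zero, applied through the family of all maximal Serre $\otimes$-ideals covering $\SpcK$) should give that $\yoneda(x)\otimes\cok(f)$ is itself a "nilpotent" or "negligible" object in a suitable sense. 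Iterating, or rather passing to a large enough tensor power, then yields $f\potimes{n}=0$.

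I expect the main obstacle to be the passage from "$Q_\cB(f)=0$ for every maximal $\cB$" to a genuine finiteness/nilpotence statement inside $\mK$ — that is, upgrading a pointwise-on-$\SpcK$ vanishing into an honest bound on a tensor power. In the triangulated setting this is exactly where the classical nilpotence theorems are delicate, and here one must instead run the argument in the abelian category $\mK$, which is why $\otimes$-flatness of $F$ is imposed: it is what allows the cokernel $\cok(f)$ and its tensor powers to be compared with tensor powers of $f$ without introducing uncontrolled $\mathrm{Tor}$-terms. I would therefore spend the bulk of the proof establishing that for a finitely presented $\cK$-module $M$ with $Q_\cB(M)=0$ for all maximal Serre $\otimes$-ideals $\cB$, some tensor power $M\potimes{n}$ vanishes — presumably by a Noetherian-induction or quasi-compactness argument on $\SpcK$ combined with the characterization of the $Q_\cB$ as the "homological residue fields" — and then feed $M=\yoneda(x)\otimes\cok(f)$ into it, using flatness to conclude $f\potimes{n}=0$ in $\MK$.
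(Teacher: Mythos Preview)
There are genuine gaps in this approach.

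First, the central claim that ``the finitely presented module $\yoneda(x)\otimes\cok(f)$ has empty homological support'' is incorrect on both counts. Since $F$ is an arbitrary $\otimes$-flat object of $\MK$, the cokernel $\cok(f)$ need not lie in $\mK$ at all, so $\yoneda(x)\otimes\cok(f)$ is not finitely presented in general. More seriously, the hypothesis $Q_\cB(f)=0$ does \emph{not} say that $Q_\cB(\cok(f))=0$; it says that $Q_\cB(\im(f))=0$, so that $Q_\cB(\cok(f))\cong Q_\cB(F)$, which is typically nonzero. Thus $\yoneda(x)\otimes\cok(f)$ does not vanish under $Q_\cB$, and your proposed key lemma (``a finitely presented $M$ with $Q_\cB(M)=0$ for all $\cB$ has $M\potimes{n}=0$'') is being fed the wrong object. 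Even if you switched to $\im(f)$, which does vanish under every $Q_\cB$, there is no evident route from ``$\im(f)\potimes{n}=0$'' back to ``$f\potimes{n}=0$'': tensor powers of the image do not control tensor powers of the map.

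Second, the proof in the paper is organized around a different idea which you are missing. One first uses rigidity of $x$ to reduce to the case $x=\unit$ (replacing $f$ by its adjunct $\hat\unit\to\hat{x}^\vee\otimes F$, which is still $\otimes$-flat). One then introduces the Serre $\otimes$-ideal $\cB_0=\{M\in\mK\mid f\potimes{n}\otimes M=0\text{ for some }n\}$ of ``$f$-locally-nilpotent'' modules; flatness of $F$ is exactly what makes this a Serre subcategory. The crucial trick, which only works because the source is $\hat\unit$, is to tensor the exact sequence $0\to\ker(f)\to\hat\unit\to F$ with $f$ itself: the diagonal of the resulting square is $f\circ j=0$, and flatness of $F$ then forces $f\otimes\ker(f)=0$. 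Writing $\ker(f)$ as a filtered union of finitely presented subobjects shows $\ker(f)\in\Locat{B_0}$. Now if $\cB_0\subsetneq\mK$, enlarge it (via Zorn, i.e.\ \Cref{Lem:exist}) to a maximal Serre $\otimes$-ideal $\cB$; then $Q_\cB(f)$ is simultaneously a monomorphism (its kernel lies in $\Locat{B}$) and zero (by hypothesis), forcing $\hat\unit\in\cB$, a contradiction. Hence $\cB_0=\mK$, so $f\potimes{n}\otimes\hat\unit=f\potimes{n}=0$. The argument never needs a standalone ``object-level nilpotence'' lemma, nor Noetherian induction on $\SpcK$.
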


All these statements are corollaries of our most general Nilpotence Theorem~\ref{Thm:main}, which further involves a `parameter' \ala Thomason~\cite{Thomason97}, \ie a closed subset $W\subseteq\SpcK$ of the spectrum on which we test the vanishing of~$f$.

\bigbreak

Finally, in \Cref{sec:examples}, we classify those homological residue fields in examples. For brevity, let us pack three theorems into one:
\begin{Thm}
\label{Thm:examples}%
There exists a surjection $\phi\colon \Spch(\cK)\onto\SpcK$ from the set of maximal Serre $\otimes$-ideals~$\cB\subset\mK$ to the triangular spectrum of~$\cK$. Moreover, it is a bijection for each of the following tensor-triangulated categories~$\cK$:
\begin{enumerate}[\rm(a)]
\item
\label{it:intro-Dperf}%
Let $X$ be a quasi-compact and quasi-separated scheme and $\cK=\Dperf(X)$ its derived category of perfect complexes. (\Cref{Cor:Dperf}.)
\item
\label{it:intro-SH}%
Let $G$ be a compact Lie group and $\cK=\SH(G)^c$ the $G$-equivariant stable homotopy category of finite genuine $G$-spectra. In particular, this holds for $\cK=\SH^c$ the usual stable homotopy category. (\Cref{Cor:SH}.)
\item
\label{it:intro-stab}%
Let $G$ be a finite group scheme over a field~$k$ and $\cK=\stab(kG)$ its stable module category of finite-dimensional $kG$-modules modulo projectives. (\Cref{Exa:Stab}.)
\end{enumerate}
\end{Thm}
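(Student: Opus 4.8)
The plan is to construct the map $\phi\colon\Spch(\cK)\onto\SpcK$ first, and then verify injectivity case by case. For the construction, I would start from a maximal Serre $\otimes$-ideal $\cB\subset\mK$ and produce a point of $\SpcK$. The natural candidate is
\[
\phi(\cB)\ :=\ \SET{x\in\cK}{\yoneda(x)\in\cB}\,,
\]
the preimage of $\cB$ under the Yoneda embedding $\yoneda\colon\cK\hook\mK$. One must check this is a prime $\otimes$-ideal of $\cK$: it is a thick $\otimes$-ideal because $\yoneda$ is a homological $\otimes$-functor and $\cB$ is a Serre $\otimes$-ideal (so pulling back along $\yoneda$ preserves the triangulated, thick, and $\otimes$-ideal structure), and it is \emph{prime} because $\cB$ maximal makes the Gabriel quotient $\bat{A}\fp=(\mK)/\cB$ behave like a `field' — concretely, one uses the key property established in \cite{BalmerKrauseStevenson19} that homological residue fields detect tensor-nilpotence, forcing $\yoneda(x\otimes y)\in\cB \Rightarrow \yoneda(x)\in\cB$ or $\yoneda(y)\in\cB$. (Here \Cref{Thm:nil-compact}, once proved, feeds back in: if neither $\yoneda(x)$ nor $\yoneda(y)$ lies in $\cB$, then $\yoneda$ does not annihilate $\id_x$ or $\id_y$ at that field, and a Künneth-type argument in the quotient abelian category shows $\yoneda(x\otimes y)\notin\cB$.) Surjectivity of $\phi$ is then the assertion that \emph{every} prime $\gp\in\SpcK$ arises this way; I expect this to follow from the discussion promised in \Cref{sec:homol-fields} showing homological residue fields cover all of $\SpcK$, not merely the closed points — presumably by localizing/finite-localizing at $\gp$ and choosing a maximal Serre $\otimes$-ideal of the module category of the quotient $\cK/\gp$ (which is nonzero, hence has such a maximal subcategory by a Zorn's-lemma argument), then pulling back.

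For the injectivity statements (a)--(c), the strategy is uniform: exhibit, for each prime $\gp\in\SpcK$, a \emph{canonical} homological residue field over $\gp$ coming from the known geometry, and show any maximal Serre $\otimes$-ideal $\cB$ with $\phi(\cB)=\gp$ must coincide with it. In case~(a), $\cK=\Dperf(X)$, the prime $\gp$ corresponds to a point $x\in X$ with residue field $\kappa(x)$, and the candidate is the homological functor built from $-\otimes^{\mathbb L}_{\calO_X}\kappa(x)$; one shows its `kernel' Serre subcategory in $\mK$ is maximal and that it is the unique such $\cB$ over $\gp$, using that $\Dperf(\kappa(x))$-valued functors on $\Dperf(X)$ factoring through the localization are essentially rigidified by $\kappa(x)$ being a field. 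In case~(b), one invokes the classification of primes in $\SH(G)^c$ (Balmer--Sanders, Barthel--Hausmann--Naumann--Nikolaus--Noel--Stapleton et al.) reducing via geometric fixed points to non-equivariant $\SH^c$, where the primes are the $p$-local chromatic ones and the canonical residue fields are the Morava $K$-theories $K(p,n)$; uniqueness then rests on the Hopkins--Smith thick subcategory theorem together with the fact that $K(n)$ is a (graded) field object. In case~(c), $\cK=\stab(kG)$, the primes are the homogeneous primes of $H^\bullet(G,k)$ (Friedlander--Pevtsova, Benson--Iyengar--Krause), and the canonical residue fields are given by $\pi$-points $\alpha\colon k[t]/t^p\to kG$ (or shifted subgroups); uniqueness uses that the $\pi$-point construction already yields a tensor-functor to $\stab$ of a field-like object, and that two maximal Serre $\otimes$-ideals with the same preimage must induce the same support data, hence the same localization.

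The main obstacle, I expect, is \textbf{uniqueness} — \ie showing $\phi$ is injective in each example — rather than surjectivity, which is formal. The subtlety is that a maximal Serre $\otimes$-ideal $\cB\subset\mK$ is a priori a much coarser datum than a concrete residue-field functor: many different abelian Gabriel quotients could, in principle, have the same kernel on the \emph{triangulated} subcategory $\cK\subset\mK$. To pin $\cB$ down, one needs to leverage that $\mK$ is the \emph{finitely presented} objects of $\MK$ and that maximality of $\cB$ forces the quotient to be a simple (or `field-like') abelian category with no proper Serre $\otimes$-ideals — so that the quotient functor is, up to equivalence, determined by its restriction to $\cK$ together with the tt-geometric data $\gp=\phi(\cB)$. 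Executing this rigorously will require, in each case, a genuinely category-specific input: the Thomason classification of thick $\otimes$-ideals of $\cK$ (to identify $\gp$ from $\cB$), plus a `rigidity of residue fields' lemma saying a tensor-functor from $\cK$ to the module category of a tt-field is essentially unique given its support. Matching these abstract maximal Serre subcategories with the classical residue-field constructions — and checking, conversely, that each classical construction really does give a \emph{maximal} (not just proper) Serre $\otimes$-ideal — is where the real work lies; this is presumably what occupies \Cref{sec:examples}, and it is also why \Cref{Rem:fields-vs-fields}'s comparison with the local constructions of \cite[\S4]{BalmerKrauseStevenson19} is needed as an intermediate bridge.
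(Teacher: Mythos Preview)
Your construction of $\phi$ and the surjectivity argument are essentially right (and match the paper), though your primality check is needlessly roundabout: there is no need to invoke nilpotence or K\"unneth. If $x\otimes y\in\phi(\cB)$ and $x\notin\phi(\cB)$, just consider $\cC=\SET{M\in\mK}{\hat x\otimes M\in\cB}$; flatness of~$\hat x$ makes this a Serre $\otimes$-ideal, it contains~$\cB$ properly only if it equals~$\mK$, so maximality gives $\cC=\cB$ and hence $\hat y\in\cB$.

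The real gap is in your injectivity strategy. You propose to show, for each $\cP\in\SpcK$, that the classical residue field construction yields the \emph{unique} maximal Serre $\otimes$-ideal over~$\cP$, via some ``rigidity of residue fields'' lemma. The paper does \emph{not} attempt anything like this, and it is unclear how one would. Instead, the paper introduces for each homological prime~$\cB$ a pure-injective weak ring object $E_\cB\in\cT$ with $\Locat{B}=\Ker(\hat E_\cB\otimes-)$, and proves the orthogonality $E_\cB\otimes E_{\cB'}=0$ for distinct~$\cB\neq\cB'$. This yields a clean \emph{converse} to the nilpotence theorem: any family $\calF\subseteq\SpchK$ whose residue functors detect $\otimes$-nilpotence of maps $x\to Y$ (with $x$ compact) must already be all of~$\SpchK$, because the unit map $\eta_{\cB'}\colon\unit\to E_{\cB'}$ is killed by every $\hB$ with $\cB\in\calF$ yet is never $\otimes$-nilpotent. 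For cases~(a) and~(b), the classical nilpotence theorems (Thomason; Devinatz--Hopkins--Smith, extended equivariantly) furnish exactly such a family indexed by~$\SpcK$, and one checks these classical functors really are homological residue fields; bijectivity of~$\phi$ then drops out because a surjection with a surjective section is a bijection.

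Your plan for case~(c) has a specific error: $\pi$-points are \emph{not} monoidal functors, as the paper explicitly notes, so they cannot play the role of homological residue fields in your scheme. The paper's argument for $\stab(kG)$ is instead via stratification: the classification of localizing subcategories of $\Stab(kG)$ by subsets of $\SpcK$ (Benson--Iyengar--Krause--Pevtsova) implies minimality of the localizing category supported at a single point~$\cP$, and then $E_\cB\otimes E_{\cB'}=0$ for $\phi(\cB)=\phi(\cB')=\cP$ forces $E_\cB=0$ or $E_{\cB'}=0$, a contradiction. No nilpotence theorem is used (or was previously available) in this case.
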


We caution that the above does \emph{not} give a new proof of the nilpotence theorems known in those examples, except perhaps for modular representation theory~\eqref{it:intro-stab}, as we discuss further in \Cref{Rem:stab-nil}. Indeed, the above results rely on existing classification results, which themselves often rely on a form of nilpotence theorem. These results should rather be read as a converse to our our nilpotence theorems via homological residue fields: If a collection of homological residue fields detects nilpotence then that collection contains all homological residue fields (\Cref{Thm:converse2nil}).

\bigbreak
\noindent\textbf{Acknowledgments}: I am very thankful to Henning Krause and Greg Stevenson for their precious support, during my stay at Bielefeld University and later during the preparation of this paper. I also thank Beren Sanders for useful discussions.

\bigbreak\goodbreak
\section{Background and notation}
\label{sec:basics}%
\medbreak

%
\begin{Hyp}
\label{Hyp:0}%
Throughout the paper, we denote by $\cat{K}$ an essentially small tensor-triangulated category and by $\unit\in\cat{K}$ its $\otimes$-unit. We often assume $\cat{K}$ \emph{rigid}, in the sense recalled in \Cref{Rem:rigid} below. See details in~\cite[\S\,1]{Balmer05a} or~\cite[\S\,1]{Balmer10b}.
\end{Hyp}

\begin{Exas}
Such~$\cat{K}$ include the usual suspects: in topology $\cat{K}=\SH^c$ the stable homotopy category of finite spectra; in algebraic geometry $\cat{K}=\Dperf(X)=\Der_{\Qcoh}(\calO_X\textrm{-Mod})^c$ the derived category of perfect complexes over a scheme~$X$ which is assumed quasi-compact and quasi-separated (\eg\ a noetherian, or an affine one); in modular representation theory $\cat{K}=\stab(\kk G)=\Stab(\kk G)^c$ the stable category of finite-dimensional $\kk$-linear representations of a finite group~$G$ over a field~$\kk$ of characteristic dividing the order of~$G$. But there are many more examples: equivariant versions, categories of motives, $KK$-categories of $C^*$-algebras, etc, etc.
\end{Exas}

\begin{Rem}
\label{Rem:tt}%
We use the \emph{triangular spectrum} $\SpcK=\SET{\cP\subset \cK}{\cP\textrm{ is a prime}}$ where a proper thick $\otimes$-ideal $\cP\subsetneq\cK$ is called a \emph{(triangular) prime} if $x\otimes y\in\cP$ implies $x\in\cP$ or $y\in\cP$. The \emph{support} of an object $x\in\cK$ is the closed subset $\supp(x):=\SET{\cP\in\SpcK}{x\notin \cP}$ $=$ $\SET{\cP\in\SpcK}{x\textrm{ is non-zero in }\cK/\cP}$. These are exactly the so-called \emph{Thomason closed} subsets of~$\SpcK$, \ie those closed subsets $Z\subseteq\SpcK$ with  quasi-compact open complement $\SpcK\oursetminus Z$, by~\cite[Prop.\,2.14]{Balmer05a}.
\end{Rem}

\begin{Rem}
\label{Rem:rigid}%
Rigidity of $\cat{K}$ will play an important role in the proof of the main \Cref{Thm:main}. Rigidity means that every object $x\in\cat{K}$ is strongly-dualizable, hence admits a dual $x^\vee\in\cat{K}$ with an adjunction $\Homcat{K}(x\otimes y,z)\cong \Homcat{K}(y,x^\vee\otimes z)$. In particular, the unit-counit relation forces $x\otimes\eta_x\colon x\to x\otimes x^\vee\otimes x$ to be a split monomorphism where $\eta_x\colon \unit\to x^\vee\otimes x$ is the unit of the adjunction. This implies that $x$ is a direct summand of a $\otimes$-multiple of $x\potimes{n}$ for any~$n\ge1$. It follows that if a map $f$ satisfies $(f\otimes x)\potimes{n}=0$ then $f\potimes{n}\otimes x=0$ as well.
\end{Rem}

\begin{Not}
\label{Not:A}%
The Grothendieck abelian category $\MK$ of right $\cat{K}$-modules, \ie additive functors $M\colon \cat{K}\op\to \Ab$, receives $\cat{K}$ via the Yoneda embedding, denoted
\[
\begin{array}{rcl}
\yoneda\,:\quad \cat{K} & \hook & \MK=\Add(\cat{K}\op,\Ab)
\\
x & \mapsto & \hat {x} :=\Homcat{K}(-,x)
\\
f & \mapsto & \hat {f}\,.
\end{array}
\]
\end{Not}

\begin{Rem}
\label{Rem:Freyd}%
Let us recall some standard facts about $\cat{K}$-modules. See details in~\cite[App.\,A]{BalmerKrauseStevenson17app}. By Day convolution, the category $\cat{A}=\MK$ admits a tensor $\otimes\colon \cat{A}\times\cat{A}\to\cat{A}$ which is colimit-preserving (in particular \emph{right-exact}) in each variable and which makes $\yoneda\colon \cat{K}\to \cat{A}$ a tensor functor: $\widehat{x\otimes y}\cong\hat{x}\otimes\hat{y}$. Hence $\yoneda$ preserves rigidity, so $\hat{x}$ will be rigid in~$\cat{A}$ whenever~$x$ is in~$\cat{K}$. Moreover, the object $\hat{x}\in\cat{A}$ is finitely presented projective and $\otimes$-flat in~$\cat{A}$. The tensor subcategory
\[
\cat{A}\fp=\mK\subset\MK=\cat{A}
\]
of finitely presented objects is itself abelian and is nothing but the classical \emph{Freyd envelope} of~$\cat{K}$, see~\cite[Chap.\,5]{Neeman01}. Recall that $\yoneda\colon \cK\hook \mK$ is the universal homological functor out of~$\cat{K}$, and that a functor from a triangulated category to an abelian category is \emph{homological} if it maps exact triangles to exact sequences. Every object of~$\cat{A}$ is a filtered colimit of finitely presented ones. In short, $\cat{A}$ is a \emph{locally coherent} Grothendieck category.
\end{Rem}

\begin{Rem}
\label{Rem:B}%
Given a Serre subcategory $\cB\subseteq\cat{A}\fp$ we can form $\Locat{B}$, or~$\overrightarrow{\cat{B}}$, the localizing subcategory of~$\cat{A}$ generated by~$\cat{B}$. The subcategory~$\Locat{B}$ is the smallest Serre subcategory containing~$\cB$ and closed under coproducts; it consists of all (filtered) colimits in~$\cat{A}$ of objects of~$\cat{B}$. For instance $\Locab{\cat{A}\fp}=\cat{A}$ and it follows that if $\cB$ is $\otimes$-ideal in~$\cat{A}\fp$ then so is $\Locat{B}$ in~$\cat{A}$. We denote the corresponding Gabriel quotient~\cite{Gabriel62} by
\[
Q_\cB\colon \cat{A}\onto \cat{A}/\Locat{B}.
\]
We recall that $\Locat{B}$ is also locally coherent with $\Locat{B}\fp=\cat{B}$ and so is the quotient $\bat{A}$ with $\bat{A}\fp\cong \cat{A}\fp/\cat{B}$. When $\cB$ is $\otimes$-ideal then $\bat{A}$ inherits a unique tensor structure turning $Q_{\cB}\colon\cat{A}\onto \bat{A}$ into a tensor functor, which preserves $\otimes$-flat objects. All this remains true without assuming $\cat{K}$ rigid. See details in~\cite[\S\,2]{BalmerKrauseStevenson19}.
\end{Rem}

\begin{Rem}
\label{Rem:adj-potimes}%
For $\cat{K}$ rigid, consider the special case of the adjunction $\Homcat{K}(x,y)\cong \Homcat{K}(\unit,x^\vee\otimes y)$. Under this isomorphism, if $f\colon x\to y$ corresponds to $g\colon \unit\to x^\vee \otimes y$ then for $n\ge1$ the morphism $f\potimes{n}\colon x\potimes{n}\to y\potimes{n}$ corresponds to \mbox{$g\potimes{n}\colon \unit\to (x^\vee\otimes y)\potimes{n}$} under the analogous isomorphism $\Homcat{K}(x\potimes{n},y\potimes{n})\cong \Homcat{K}(\unit,(x\potimes{n})^\vee\otimes y\potimes{n})$ $\cong \Homcat{K}(\unit,(x^\vee\otimes y)\potimes{n})$. In particular, $f$ is $\otimes$-nilpotent if and only if~$g$ is. Note that the above observation only uses that $x$ is rigid in a tensor category and does not use that $y$ itself is rigid. We can therefore also use this trick for any morphism $f\colon\hat{x}\to M$ in the module category~$\cat{A}=\MK$, as long as $x$ comes from~$\cK$.
\end{Rem}

We shall need the following folklore result about modules and localization:

\begin{Prop}
\label{Prop:Mod-quotients}%
Let $\cat{J}\subset\cat{K}$ be a thick $\otimes$-ideal and let~$q\colon \cat{K}\to \cat{L}$ be the corresponding Verdier quotient $\cat{K}\onto \cat{K}/\cat{J}$ or its idempotent completion $\cat{K}\to (\cat{K}/\cat{J})^\natural$. Consider the left Kan extension $q_!\colon \MK\too \MMod\cat{L}$, left-adjoint to restriction $q^*\colon \MMod\cat{L}\too \MK$ along~$q$. Then $q_!$ is a localization identifying $\MMod\cat{L}$ as the Gabriel quotient of~$\MK$ by $\Ker(q_!)=\Locab{\yoneda(\cat{J})}$ the localizing subcategory generated by~$\yoneda(\cat{J})$. The localization~$q_!$ restricts to a localization $q_!\colon \mK\onto \mmod\cat{L}$ on finitely presented objects, identifying $\mmod\cat{L}$ as the quotient of~$\mK$ by $\Ker(q_!)\fp$ which is also the Serre envelope of~$\yoneda(\cat{J})$ in~$\mK$.
\end{Prop}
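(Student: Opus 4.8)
The plan is to reduce everything to the well-known description of modules over a Verdier quotient and then transfer it to the finitely presented level. First I would recall the basic adjunction $q_! \dashv q^*$ between $\MK$ and $\MMod\cat{L}$ coming from the functor $q\colon\cK\to\cL$, where $q^*$ is precomposition with $q\op$ and $q_!$ is its left Kan extension; on representables $q_!$ sends $\yoneda(x)$ to $\yoneda(qx)$. The unit and counit of this adjunction are the obvious candidates, and the first task is to check that $q^*$ is fully faithful. For a Verdier quotient (or its idempotent completion) the induced map $q$ is essentially surjective up to summands and the hom-sets of $\cL$ are computed as filtered colimits of hom-sets of $\cK$ along the maps inverted in~$\cat{J}$; this is exactly what is needed to see that $q^*M \mapsto M$ via the counit is an isomorphism, equivalently that $q_!q^* \cong \Id$. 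Once $q^*$ is fully faithful with colimit-preserving left adjoint $q_!$, standard localization theory (Gabriel--Popescu style) gives that $q_!$ is a Bousfield localization and identifies $\MMod\cat{L}$ with the Gabriel quotient $\MK/\Ker(q_!)$.

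The second step is to identify the kernel $\Ker(q_!)$ with $\Locab{\yoneda(\cat{J})}$. The inclusion $\supseteq$ is immediate: for $j\in\cat{J}$ we have $q_!\yoneda(j)=\yoneda(qj)=0$ since $qj=0$ in~$\cL$, and $\Ker(q_!)$ is a localizing subcategory because $q_!$ is exact (being a left adjoint that is also exact here, since $q^*$ is exact and $q_!$ is computed by a filtered colimit of representables — I would cite the locally coherent picture from \Cref{Rem:Freyd}). For the reverse inclusion $\subseteq$, the cleanest route is to compare Gabriel quotients: both $\MK/\Ker(q_!)$ and $\MK/\Locab{\yoneda(\cat{J})}$ are Grothendieck localizations of~$\MK$, and there is a comparison functor between them induced by the universal property; one checks it is an equivalence by checking it on the representable generators, where both sides send $\yoneda(x)$ to $\yoneda(qx)$. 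Since a localizing subcategory of a Grothendieck category is determined by the localization it induces, this forces $\Ker(q_!)=\Locab{\yoneda(\cat{J})}$.

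The third step passes to finitely presented objects. Here I would invoke that $\MK$ and $\MMod\cat{L}$ are locally coherent and that $q_!$, being a colimit-preserving functor sending the compact projective generators $\yoneda(x)$ to the compact projective generators $\yoneda(qx)$, restricts to a functor $\mK\to\mmod\cat{L}$ on finitely presented objects; it is essentially surjective because every object of $\cL$ is a summand of some~$qx$ and $\mmod\cat{L}$ is idempotent complete. That this restricted functor exhibits $\mmod\cat{L}$ as the Serre quotient of $\mK$ by $\Ker(q_!)\fp = \Ker(q_!)\cap\mK$ follows from the general fact (\Cref{Rem:B}, or \cite[\S\,2]{BalmerKrauseStevenson19}) that for a localization $\cat{A}\to\cat{A}/\cat{S}$ of locally coherent categories with $\cat{S}$ generated by finitely presented objects one has $(\cat{A}/\cat{S})\fp \cong \cat{A}\fp/\cat{S}\fp$ and $\cat{S}\fp = \Locab{\cat{S}\fp}\fp$. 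Finally, $\Ker(q_!)\fp$ is the Serre envelope of $\yoneda(\cat{J})$ in $\mK$ because $\Ker(q_!)=\Locab{\yoneda(\cat{J})}$ and the finitely presented part of a localizing subcategory generated by a set of finitely presented objects is precisely the Serre subcategory they generate.

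I expect the main obstacle to be the full faithfulness of $q^*$, equivalently the isomorphism $q_!q^*\cong\Id$: this is where the specific structure of a Verdier quotient (the calculus-of-fractions description of $\Hom_{\cL}$ and essential surjectivity up to summands) really enters, and it is the one place where we cannot argue purely formally from the adjunction. The passage through the idempotent completion $(\cK/\cJ)^\natural$ must be handled with a little care, since then $q$ is not essentially surjective on the nose, only up to summands — but modules do not see the difference between a category and its idempotent completion (restriction along $\cat{L}\to\cat{L}^\natural$ is an equivalence on module categories), so this case reduces to the plain Verdier quotient. Everything else is a matter of assembling standard facts about Grothendieck and locally coherent categories.
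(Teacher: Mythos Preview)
Your first and third steps align with the paper's approach: the paper simply cites \cite[Thm.\,4.4 and \S\,3]{Krause05} for the fact that $q_!$ is a localization, and implicitly uses the locally coherent machinery of \Cref{Rem:B} for the passage to finitely presented objects, just as you propose.

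The gap is in your second step. Knowing that the comparison functor $\MK/\Locab{\yoneda(\cJ)} \to \MK/\Ker(q_!)\simeq\MMod\cL$ agrees on the \emph{objects} $\yoneda(x)\mapsto\yoneda(qx)$ is not enough to conclude it is an equivalence. You would need to verify that it is fully faithful on these generators, i.e.\ that $\Hom_{\MK/\Locab{\yoneda(\cJ)}}(\bar{\yoneda(x)},\bar{\yoneda(y)})\cong\Hom_{\cL}(qx,qy)$; but computing the left-hand side in the Serre quotient is precisely the work you are trying to avoid, and your sentence ``one checks it is an equivalence by checking it on the representable generators'' does not discharge it. Equivalently, the comparison functor is a further Gabriel localization, so it is an equivalence if and only if its kernel $\Ker(q_!)/\Locab{\yoneda(\cJ)}$ vanishes --- which is exactly the inclusion $\Ker(q_!)\subseteq\Locab{\yoneda(\cJ)}$ you set out to prove.

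The paper establishes that inclusion directly: for $M\in\Ker(q_!)$, take any $\alpha\colon\hat{x}\to M$; since $q_!(\alpha)=0$ and $\widehat{q(x)}$ is finitely presented projective in $\MMod\cL$, the map $\alpha$ must factor through some $\hat\beta\colon\hat{x}\to\hat{y}$ with $q(\beta)=0$ in $\cL$, hence $\beta$ factors through an object of~$\cJ$. Thus every map from a representable into $M$ factors through $\yoneda(\cJ)$, which forces $M\in\Locab{\yoneda(\cJ)}$. This element-level argument is short and avoids the circularity in your comparison-of-quotients approach; you should replace your second step with it.
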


\begin{proof}
The fact that $q_!$ is a localization follows from~\cite[Thm.\,4.4 and \S\,3]{Krause05}. The left Kan extension $q_!(M)$ is defined as $\colim_{\alpha\colon\hat{x}\to M}q_!(\hat{x})$ with $q_!(\hat{x})=\widehat{q(x)}$. To identify the kernel of~$q_!\colon \MK\to \MMod\cat{L}$, since ${\yoneda(\cat{J})}\subseteq \Ker(q_!)\fp$ is clear, it suffices to show $\Ker(q_!)\subseteq \Locab{\yoneda(\cat{J})}$. For every $M\in\Ker(q_!)$, using that $\widehat{q(x)}$ is finitely presented projective for all~$x\in\cat{K}$ together with faithfulness of Yoneda $\cat{L}\hook \MMod\cat{L}$, one shows that every morphism $\alpha\colon\hat{x}\to M$ with $x\in\cat{K}$ factors via a morphism $\hat{\beta}\colon \hat{x}\to \hat{y}$ where $q(\beta)$ vanishes in~$\cat{K}/\cat{J}$, meaning that the morphism $\beta\colon x\to y$ in~$\cat{K}$ factors via an object of~$\cat{J}$. In short, every morphism $\hat{x}\to M$ factors via an object in~$\yoneda(\cat{J})$ which implies that~$M$ belongs to the localizing subcategory~$\Locab{\yoneda(\cat{J})}$. The $\otimes$-properties are then easily added onto this purely abelian picture.
\end{proof}

\bigbreak\goodbreak
\section{Homological primes and homological residue fields}
\label{sec:homol-fields}%
\medbreak

Let $\cat{K}$ be a tensor-triangulated category as in \Cref{Hyp:0}.

\begin{Def}
\label{Def:homol-fields}%
A \emph{(coherent) homological prime} for~$\cK$ is a \emph{maximal} proper Serre $\otimes$-ideal subcategory $\cat{B}\subset\cat{A}\fp=\mK$ of the Freyd envelope of~$\cat{K}$. The \emph{homological residue field} corresponding to~$\cB$ is the functor constructed as follows
\[
\xymatrix@C=1em@R=.2em{\boneda_{\cB}=Q_{\cat{B}}\circ \yoneda\colon
& \cat{K} \ \ar@{^(->}[rr]^-{\yoneda}
&& \cat{A}=\MK \ar@{->>}[rr]^-{Q_{\cat{B}}}
&& \bat{A}(\cat{K};\cat{B}):=\frac{\Displ\MK}{\Displ\Locat{B}}
\\
& x \ \ar@{|->}[rr]
&& \quad \hat{x} \quad \ar@{|->}[rr]
&& \quad \bar{x}\,. \kern8em
}
\]
The functor $\boneda_{\cB}\colon \cK\to \AKB\fp$ is a (strong) monoidal homological functor (\Cref{Rem:Freyd}), that lands in the finitely presented subcategory~$\AKB\fp\cong(\mK)/\cat{B}$. By construction, the tensor-abelian category $\AKB\fp$ has only the trivial Serre $\otimes$-ideals,~$0$ and $\AKB\fp$ itself. These homological residue fields are truly the same as the homological $\otimes$-functors constructed in~\cite[\S\,4]{BalmerKrauseStevenson19}, up to a little paradigm change that we explain in \Cref{Rem:fields-vs-fields} below.
\end{Def}

\begin{Rem}
Since $\cat{K}$ is essentially small, its Freyd envelope,~$\mK$, admits only a \emph{set} of Serre subcategories. So we can apply Zorn to construct homological primes and homological residue fields as soon as $\cat{K}\neq0$. Contrary to what happens with commutative rings, these \emph{maximal} Serre $\otimes$-ideals are not only picking up `closed points' as one could first fear. In fact, they `live' above \emph{every} prime of the triangular spectrum $\SpcK$ of~$\cat{K}$ (\Cref{Rem:tt}). First, let us explain the relationship.
\end{Rem}

\begin{Prop}
\label{Prop:field-to-prime}%
Let $\cB$ be a homological prime with homological residue field $\boneda_{\cB}\colon \cat{K}\to \bat{A}(\cat{K};\cat{B})$. Then $\cP(\cB):=\Ker(\boneda_{\cB})=\yoneda\inv(\cat{B})$ is a triangular prime of~$\cat{K}$.
\end{Prop}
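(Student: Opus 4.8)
The plan is to verify the defining properties of a triangular prime for the subcategory $\cP(\cB)=\yoneda\inv(\cB)\subseteq\cK$. First I would check that $\cP(\cB)$ is a proper thick $\otimes$-ideal of~$\cK$. Thickness and the $\otimes$-ideal property are formal: $\yoneda$ is a homological $\otimes$-functor, $\cB$ is a Serre $\otimes$-ideal of~$\mK$, and the preimage of a Serre $\otimes$-ideal under an exact-sequence-preserving $\otimes$-functor from a triangulated category is automatically a thick $\otimes$-ideal (one uses that an exact triangle in~$\cK$ maps to an exact sequence in~$\mK$, plus closure of $\cB$ under the relevant subquotients, and for idempotent-closure one uses that $\cB$ is closed under summands). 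Properness amounts to $\unit\notin\cP(\cB)$, i.e. $\boneda_{\cB}(\unit)=\bar\unit\neq 0$ in $\AKB\fp$; this holds because $\bar\unit$ is the $\otimes$-unit of the nonzero category $\AKB\fp$ (nonzero precisely because $\cB$ is a \emph{proper} Serre subcategory of~$\mK$).

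The crux is primeness: if $x\otimes y\in\cP(\cB)$ then $x\in\cP(\cB)$ or $y\in\cP(\cB)$. Translating, $\boneda_{\cB}(x\otimes y)=\bar x\otimes\bar y=0$ in $\AKB\fp$, and I must deduce $\bar x=0$ or $\bar y=0$. This is exactly where maximality of~$\cB$ enters: by \Cref{Def:homol-fields}, $\AKB\fp$ has \emph{only} the trivial Serre $\otimes$-ideals. So I would consider the Serre $\otimes$-ideal of $\AKB\fp$ generated by~$\bar x$, call it $\Locab{\bar x}$; by the maximality/simplicity of~$\AKB\fp$ it is either $0$ or all of $\AKB\fp$. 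In the first case $\bar x=0$, so $x\in\cP(\cB)$ and we are done. In the second case, $\bar\unit\in\Locab{\bar x}$, and since $\bar x\otimes\bar y=0$, tensoring the $\otimes$-ideal $\Locab{\bar x}$ with $\bar y$ gives that $\bar\unit\otimes\bar y=\bar y$ lies in $\Locab{\bar x}\otimes\bar y$, which consists of subquotients of finite coproducts of objects $\bar x\otimes(\text{something})\otimes\bar y=0$ — hence $\bar y=0$, so $y\in\cP(\cB)$. (More precisely: $\Locab{\bar x}\otimes\bar y$, the Serre $\otimes$-ideal generated by $\bar x\otimes\bar y=0$, is the zero subcategory but still contains $\bar\unit\otimes\bar y$ because $\Locab{\bar x}$ contains $\bar\unit$; one must be slightly careful since tensoring is only right-exact, but the generation of a Serre $\otimes$-ideal by a single object only involves finite coproducts, images, and subobjects/quotients, all of which are controlled once the generator vanishes.)

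The step I expect to require the most care is the last one: making rigorous the passage ``$\bar\unit\in\Locab{\bar x}$ and $\bar x\otimes\bar y=0$ imply $\bar y=0$'' in a merely right-exact tensor setting. The clean way is to observe that $\Locab{\bar x}$ being a Serre $\otimes$-ideal equal to the whole category means the localization of $\AKB\fp$ at $\Locab{\bar x}$ is zero, equivalently every object — in particular $\bar\unit$ — is built from $\bar x$ by iterated extensions, subobjects, quotients and finite coproducts, and tensoring this filtration of $\bar\unit$ with $\bar y$ (which only uses right-exactness to preserve quotients and coproducts, and left-exactness is not even needed since we are mapping everything to~$0$) exhibits $\bar y\cong\bar\unit\otimes\bar y$ as built from copies of $\bar x\otimes\bar y=0$, forcing $\bar y=0$. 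Alternatively, and perhaps most cleanly, since the ambient $\cK$ is \emph{rigid} (\Cref{Rem:rigid}, \Cref{Rem:Freyd}), $\bar x=\boneda_{\cB}(x)$ is rigid in~$\AKB\fp$ with dual $\overline{x^\vee}$, so from $\bar x\otimes\bar y=0$ we get $\bar y$ is a direct summand of $\overline{x^\vee}\otimes\bar x\otimes\bar y=0$ unless $\bar x=0$ — no, that is not quite it either; rather: the full $\otimes$-ideal generated by a rigid object~$\bar x$ consists of all summands of $\bar x\otimes(-)$, so $\bar\unit\in\Locab{\bar x}$ forces $\bar\unit$ to be a summand of some $\bar x\otimes \bar z$, whence $\bar y$ is a summand of $\bar x\otimes\bar z\otimes\bar y=0$. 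This rigid argument avoids any delicate filtration chase and is the route I would ultimately take.
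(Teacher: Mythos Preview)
Your overall strategy --- pass to the quotient $\AKB\fp$, use that it has no nontrivial Serre $\otimes$-ideals, and deduce $\bar{x}\otimes\bar{y}=0 \Rightarrow \bar{x}=0$ or $\bar{y}=0$ --- is sound and close in spirit to the paper's argument. But both of your proposed completions of the key step have genuine gaps.

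In the filtration approach, the claim ``left-exactness is not even needed since we are mapping everything to~$0$'' is incorrect. The class $\{M\in\AKB\fp:\bar{y}\otimes M=0\}$ is closed under quotients and extensions by right-exactness of~$\otimes$, but it is \emph{not} closed under subobjects unless $\bar{y}$ is flat: from $A\hookrightarrow B$ and $B\otimes\bar{y}=0$ you cannot conclude $A\otimes\bar{y}=0$. Since the Serre envelope of~$\bar{x}$ genuinely involves taking subobjects, the vanishing does not propagate. The fix is immediate once you recall that $\hat{y}$ is $\otimes$-flat in~$\mK$ and $Q_\cB$ preserves flatness (\Cref{Rem:Freyd}, \Cref{Rem:B}), so $\bar{y}$ \emph{is} flat --- but you never invoke this.

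In the rigidity approach, the assertion that ``the full $\otimes$-ideal generated by a rigid object~$\bar{x}$ consists of all summands of $\bar{x}\otimes(-)$'' is a \emph{triangulated} fact (about thick $\otimes$-ideals, via cones), not an abelian one. A Serre $\otimes$-ideal is closed under arbitrary subquotients, and there is no reason a subobject of $\bar{x}\otimes M$ should be a summand of some $\bar{x}\otimes M'$. So you cannot conclude that $\bar{\unit}$ is a summand of some $\bar{x}\otimes\bar{z}$.

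The paper's proof uses the same essential ingredient (flatness of representables) but packages it more efficiently, working upstairs in~$\mK$ rather than in the quotient: assuming $x\notin\cP(\cB)$, set $\cat{C}=\{M\in\mK:\hat{x}\otimes M\in\cB\}$. This is a Serre $\otimes$-ideal precisely because $\hat{x}$ is $\otimes$-flat; it contains~$\cB$ and is proper since $\hat{x}\notin\cB$, so maximality forces $\cat{C}=\cB$, whence $\hat{y}\in\cB$. Transplanted to your quotient setting this is the one-liner: $\{M\in\AKB\fp:\bar{x}\otimes M=0\}$ is a Serre $\otimes$-ideal (flatness of~$\bar{x}$), proper if $\bar{x}\neq0$, hence zero by simplicity, hence $\bar{y}=0$. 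This annihilator argument replaces both of your attempts and avoids any filtration chase.
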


\begin{proof}
Since Yoneda~$\yoneda\colon\cat{K}\to \mK$ is homological and (strong) monoidal, the preimage $\cP(\cB)=\yoneda\inv(\cB)$ is a proper thick $\otimes$-ideal of~$\cat{K}$. To see that $\cP(\cB)$ is prime, let $x,y\in\cat{K}$ with $x\otimes y\in\cP(\cB)$ and $x\notin\cP(\cB)$ and let us show that $y\in\cP(\cB)$. Consider the $\otimes$-ideal~$\cat{C}=\SET{M\in\mK}{\hat{x}\otimes M\in\cat{B}}$. It is Serre by flatness of~$\hat{x}$ and the assumption $x\notin\cP(\cB)$ implies $\cat{B}\subseteq \cat{C}\neq\mK$. Therefore $\cat{C}=\cat{B}$ by maximality of~$\cat{B}$ and we get $y\in \yoneda\inv(\cat{C})=\yoneda\inv(\cat{B})=\cP(\cB)$.
\end{proof}

\begin{Rem}
\label{Rem:Spc'}%
We can push the analogy with the triangular spectrum $\SpcK$ a little further by considering the set $\Spch(\cat{K})$ of all homological primes:
\[
\Spch(\cK)=\SET{\cB\subset\mK}{\cB\textrm{ is a maximal Serre $\otimes$-ideal}}\,.
\]
We call it the \emph{homological spectrum of~$\cat{K}$} and equip it with a topology having as basis of closed subsets the following subsets~$\supph(x)$, one for every $x\in\cat{K}$:
\[
\supph(x):=\SET{\cB\in\Spch(\cat{K})}{\hat{x}\notin\cB}=\SET{\cB\in\Spch(\cat{K})}{\bar x\neq0\textrm{ in }\AKB}.
\]
One can verify that this pair $(\Spch(\cat{K}),\supph)$ is a \emph{support data} on~$\cat{K}$, in the sense of~\cite{Balmer05a}. Hence there exists a unique continuous map
\[
\phi\colon \Spch(\cat{K})\to \SpcK
\]
such that $\supph(x)=\phi\inv(\supp(x))$ for every~$x\in \cat{K}$. The explicit formula for~$\phi$ (\cite[Thm.\,3.2]{Balmer05a}) shows that $\phi$ is exactly the map $\cB\mapsto \yoneda\inv(\cB)$ of \Cref{Prop:field-to-prime}. We prove in \Cref{Cor:surj} below that this comparison map is surjective, at least for~$\cat{K}$ rigid. In fact, there are many known examples where $\phi$ is bijective. See \Cref{sec:examples}.
\end{Rem}

\begin{Def}
\label{Def:field-above-W}%
It will be convenient to say that a homological prime $\cB\in\Spch(\cK)$ \emph{lives over} a given subset $W\subseteq\SpcK$ of the triangular spectrum if the prime $\cP(\cB)=\yoneda\inv(\cB)$ of \Cref{Prop:field-to-prime} belongs to~$W$. By extension, we shall also say in that case that the corresponding homological residue field $\boneda=\boneda_{\cB}$ \emph{lives over~$W$}.
\end{Def}

In order to show surjectivity of~$\phi$, we derive from \Cref{Prop:Mod-quotients} the following:

\begin{Cor}
\label{Cor:Spc'(K/J)}%
Let $\cJ\subset\cK$ be a thick $\otimes$-ideal. With notation as in \Cref{Prop:Mod-quotients}, there is an inclusion-preserving one-to-one correspondence $\cat{C}\mapsto (q_!)\inv(\cat{C})$ between (maximal) Serre $\otimes$-ideals $\cat{C}$ of~$\mmod\cat{L}$ and the (maximal) Serre $\otimes$-ideals $\cB$ of~$\mK$ which contain~$\yoneda(\cJ)$; the inverse is given by $\cB\mapsto q_!(\cB)$. If $\cat{C}$ corresponds to~$\cB$ then the residue categories are canonically equivalent $\bat{A}(\cat{K};\cat{B})\cong \bat{A}(\cat{L};\cat{C})$ in such a way that the following diagram commutes up to canonical isomorphism
\[
\xymatrix@R=1.5em{
\cat{K} \ \ar@{^(->}[r]^-{\yoneda} \ar@/^1.5em/@<.4em>[rrr]^-{\boneda_{\cB}} \ar[d]_-{q}
& \mK \ar@{->>}[d]_-{q_!}\ \ar@{^(->}[r]^-{}
& \MK \ar@{->>}[r]^-{Q_\cB} \ar@{->>}[d]_-{q_!}
& \bat{A}(\cat{K};\cat{B}) \ar@{=}[d]^-{\cong}
\\
\cat{L} \ \ar@{^(->}[r]^-{\yoneda} \ar@/_1.5em/[rrr]_-{\boneda_{\cat{C}}}
& \mmod\cat{L} \ \ar@{^(->}[r]^-{}
& \MMod\cat{L} \ar@{->>}[r]^-{Q_{\cat{C}}}
& \bat{A}(\cat{L};\cat{C})\,.
}
\]
\end{Cor}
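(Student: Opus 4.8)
The plan is to deduce everything from \Cref{Prop:Mod-quotients}. That proposition already identifies $q_!\colon\MK\onto\MMod\cat{L}$ as the Gabriel quotient by the localizing subcategory $\Ker(q_!)=\Locab{\yoneda(\cJ)}$, and it restricts to $q_!\colon\mK\onto\mmod\cat{L}$, the Serre quotient by $\Ker(q_!)\fp$, which is the Serre envelope of $\yoneda(\cJ)$ in $\mK$. So the first step is to invoke the standard correspondence theorem for Serre subcategories under a Serre quotient functor: if $P\colon\cat{E}\onto\cat{E}/\cat{S}$ is a Serre quotient of abelian categories, then $\cat{D}\mapsto P\inv(\cat{D})$ is an inclusion-preserving bijection between Serre subcategories of $\cat{E}/\cat{S}$ and Serre subcategories of $\cat{E}$ containing $\cat{S}$, with inverse $\cat{C}\mapsto P(\cat{C})$ (here $P(\cat{C})$ denotes the Serre subcategory generated by the image, which equals the essential image since $P$ is a quotient). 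Applying this with $P=q_!\colon\mK\onto\mmod\cat{L}$ and $\cat{S}=\Ker(q_!)\fp$, and noting that a Serre subcategory of $\mK$ contains $\Ker(q_!)\fp$ if and only if it contains $\yoneda(\cJ)$ (since the former is the Serre envelope of the latter), gives the bijection $\cat{C}\mapsto(q_!)\inv(\cat{C})$ of the statement on the level of Serre subcategories.

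The second step is to check that this bijection respects the two extra structures claimed: the $\otimes$-ideal property and maximality. Since $q_!$ is a tensor functor (the $\otimes$ on $\mmod\cat{L}$ being induced, as in \Cref{Prop:Mod-quotients} and \Cref{Rem:B}) and is surjective on objects up to isomorphism, the preimage $(q_!)\inv(\cat{C})$ of a $\otimes$-ideal $\cat{C}$ is a $\otimes$-ideal; conversely, for $\cB\supseteq\yoneda(\cJ)$ a $\otimes$-ideal, its image $q_!(\cB)$ is closed under tensoring by arbitrary objects of $\mmod\cat{L}$ precisely because every such object is (a summand of, or more simply the image of) some $q_!(N)$ and $q_!(N)\otimes q_!(M)\cong q_!(N\otimes M)$. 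Maximality is then formal: an inclusion-preserving, inclusion-reflecting bijection between $\{\text{proper Serre }\otimes\text{-ideals of }\mmod\cat{L}\}$ and $\{\text{proper Serre }\otimes\text{-ideals of }\mK\text{ containing }\yoneda(\cJ)\}$ carries maximal elements to maximal elements. (One only needs that $\mK$ itself does not lie in the second set unless $\cat{L}=0$, which is clear.)

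The third step is the identification of the residue categories and the commutativity of the diagram. Given $\cat{C}\subset\mmod\cat{L}$ corresponding to $\cB=(q_!)\inv(\cat{C})\subset\mK$, pass to localizing subcategories: $\Locat{B}$ in $\MK$ and $\Locat{C}$ in $\MMod\cat{L}$. Because $q_!\colon\MK\onto\MMod\cat{L}$ is itself a Gabriel quotient, the composite $Q_{\cat{C}}\circ q_!\colon\MK\onto\bat{A}(\cat{L};\cat{C})$ is again a Gabriel quotient, and its kernel is the localizing subcategory $(q_!)\inv(\Locat{C})$; this kernel contains both $\Ker(q_!)=\Locab{\yoneda(\cJ)}$ and (the localizing closure of) $\cB$, hence contains $\Locat{B}$, and a finitely-presented-generators check using $\Locat{B}\fp=\cB$ and $\Locat{C}\fp=\cat{C}$ (via \Cref{Rem:B}) shows it equals $\Locat{B}$. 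Two Gabriel quotients of $\MK$ with the same kernel are canonically equivalent, so $\bat{A}(\cat{K};\cat{B})=\MK/\Locat{B}\xrightarrow{\ \cong\ }\bat{A}(\cat{L};\cat{C})$, and under this equivalence $Q_{\cat{C}}\circ q_!$ becomes $Q_{\cat{B}}$; precomposing with $\yoneda\colon\cK\hook\MK$ and using $q_!\circ\yoneda_{\cK}\cong\yoneda_{\cL}\circ q$ (which is exactly the formula $q_!(\hat{x})=\widehat{q(x)}$ recorded in \Cref{Prop:Mod-quotients}) yields $\boneda_{\cat{C}}\circ q\cong\boneda_{\cB}$ and the commutativity of the displayed square.

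The main obstacle, I expect, is none of the abstract nonsense above but rather the bookkeeping in the third step: making sure that "same kernel on finitely presented objects" upgrades to "same localizing kernel," so that the canonical equivalence of Gabriel quotients is legitimate. This is where one genuinely uses local coherence — that $\Locat{B}$ is determined by $\cB=\Locat{B}\fp$ and similarly for $\Locat{C}$, and that $q_!$ (being a localization of locally coherent categories) is compatible with the passage between finitely presented and localizing subcategories — all of which is supplied by \Cref{Prop:Mod-quotients} and \Cref{Rem:B}. Once that compatibility is in hand, the rest is the correspondence theorem for Serre quotients plus the universal property of Gabriel localization, and the $\otimes$-structure "is then easily added onto this purely abelian picture," as in the proof of \Cref{Prop:Mod-quotients}.
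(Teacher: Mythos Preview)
Your proposal is correct and follows exactly the route the paper takes: the paper's entire proof reads ``Standard `third isomorphism theorem' about ideals in a quotient,'' and what you have written is precisely a careful unpacking of that sentence via \Cref{Prop:Mod-quotients} and \Cref{Rem:B}. Your identification of the one genuine point to check---that the localizing kernels agree, not just their finitely presented parts---is well placed, and your use of local coherence to handle it is the intended mechanism.
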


\begin{proof}
Standard `third isomorphism theorem' about ideals in a quotient.
\end{proof}

\begin{Rem}
In the notation of \Cref{Rem:Spc'}, the above \Cref{Cor:Spc'(K/J)} can be re\-phrased as saying that, for $\cL=\cK/\cJ$ or $(\cat{K}/\cJ)^\natural$, the map $\Spch(q_!)\colon\cat{C}\mapsto (q_!)\inv\cat{C}$ yields a homeomorphism between $\Spch(\cL)$ and the subspace $\SET{\cB\in\Spch(\cat{K})}{\cP(\cB)\supseteq\cJ}$ of $\Spch(\cK)$. In other words, the following commutative diagram is cartesian\,:
\[
\xymatrix@C=4em@R=1.5em{
\Spch(\cL) \ \ar@{^(->}[r]^-{\Spch(q_!)} \ar[d]_-{\phi}
& \Spch(\cK) \ar[d]^-{\phi}
\\
\Spc(\cL) \ \ar@{^(->}[r]^-{\Spc(q)}
& \SpcK\,.
}
\]
In the terminology of \Cref{Def:field-above-W}, the homological primes (or the residue fields) of~$\cL=\cK/\cJ$ or $\cL=(\cK/\cJ)^\natural$ canonically correspond to those of~$\cK$ which live `above' the subset $W(\cJ)=\SET{\cP\in\SpcK}{\cJ\subseteq \cP}\cong\Spc(\cat{L})$ of~$\SpcK$.
\end{Rem}

Let us prove an analogue of an old tensor-triangular friend~\cite[Lem.\,2.2]{Balmer05a}:
\begin{Lem}
\label{Lem:exist}%
Suppose that $\cK$ is rigid. Let $\cat{S}\subset \cK$ be a $\otimes$-multiplicative class of objects (\ie $\unit\in \cat{S}\supseteq \cat{S}\otimes \cat{S}$) and let $\cB_0\subset\cA\fp=\mK$ be a Serre $\otimes$-ideal which avoids~$\cat{S}$, that is, $\cB_0\cap \yoneda(\cat{S})=\varnothing$. Then there exists $\cB\subset\cA\fp$ a maximal Serre $\otimes$-ideal such that $\cB_0\subseteq\cB$ and $\cB$ still avoids~$\cat{S}$.
\end{Lem}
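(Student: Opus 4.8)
The plan is to run a standard Zorn's lemma argument on the poset of Serre $\otimes$-ideals containing $\cB_0$ and avoiding $\cat{S}$, where the only nontrivial point is verifying that this poset is nonempty (which it is, by hypothesis on $\cB_0$) and that chains have upper bounds. First I would observe that the set of all Serre subcategories of $\mK$ is genuinely a set, since $\cat{K}$ is essentially small (this was already noted in the excerpt), so Zorn applies without set-theoretic worries. Given a chain $(\cB_i)_{i\in I}$ of Serre $\otimes$-ideals with $\cB_0 \subseteq \cB_i$ and $\cB_i \cap \yoneda(\cat{S}) = \varnothing$, the union $\bigcup_i \cB_i$ is again a $\otimes$-ideal and is closed under subobjects, quotients, and extensions in $\mK$ (for extensions one uses that any short exact sequence, having finitely presented outer terms, is built from objects lying in a single $\cB_i$ far enough along the chain); hence it is a Serre $\otimes$-ideal, it contains $\cB_0$, and it avoids $\cat{S}$ because none of the $\cB_i$ meets $\yoneda(\cat{S})$. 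So the poset is closed under unions of chains and Zorn produces a maximal element $\cB$ of \emph{this poset}.

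The genuine subtlety — and the reason rigidity is invoked — is that a maximal element of the poset ``Serre $\otimes$-ideals containing $\cB_0$ and avoiding $\cat{S}$'' need not a priori be maximal among \emph{all} proper Serre $\otimes$-ideals of $\mK$, i.e.\ need not be a homological prime in the sense of \Cref{Def:homol-fields}. What I would prove is that, because $\unit \in \cat{S}$, any proper Serre $\otimes$-ideal avoiding $\cat{S}$ that strictly contains $\cB$ would contradict maximality in the poset; more precisely, avoidance of $\cat{S}$ in particular forces $\widehat{\unit} = \unit_{\cat{A}} \notin \cB$, so $\cB \neq \mK$, i.e.\ $\cB$ is automatically proper. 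Then the key claim is that $\cB$ is maximal among \emph{all} proper Serre $\otimes$-ideals: suppose $\cB \subsetneq \cat{B}'$ with $\cat{B}'$ a proper Serre $\otimes$-ideal of $\mK$. Since $\cB$ is maximal in the poset, $\cat{B}'$ must meet $\yoneda(\cat{S})$, say $\widehat{s} \in \cat{B}'$ for some $s \in \cat{S}$. I would then use rigidity: by \Cref{Rem:Freyd} the object $\widehat{s}$ is rigid in $\cat{A}\fp$, hence $\widehat{s}^\vee \otimes \widehat{s}$ contains $\unit_{\cat{A}}$ as a direct summand (the unit–counit relation, exactly as in \Cref{Rem:rigid}), so $\widehat{s} \in \cat{B}'$ and $\otimes$-ideality force $\unit_{\cat{A}} \in \cat{B}'$, whence $\cat{B}' = \mK$, contradicting properness. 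Therefore no such $\cat{B}'$ exists and $\cB$ is a maximal proper Serre $\otimes$-ideal, i.e.\ a homological prime.

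Assembling the pieces: $\cB$ contains $\cB_0$ by construction, $\cB$ avoids $\cat{S}$ by construction, and the rigidity argument upgrades ``maximal in the poset'' to ``maximal among all proper Serre $\otimes$-ideals'', which is precisely the assertion of the lemma. I expect the main obstacle to be this last upgrade — ensuring that avoiding the $\otimes$-multiplicative set $\cat{S}$ really does pin down genuine maximality — and it is here that the rigidity hypothesis does its work, through the splitting $x \otimes \eta_x$ recalled in \Cref{Rem:rigid} applied to the rigid objects $\widehat{s} \in \cat{A}\fp$. The chain-union step and the ``set, not proper class'' step are routine and I would dispatch them in a sentence each.
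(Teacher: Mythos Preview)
Your Zorn step is fine, and you correctly identify that the real issue is upgrading ``maximal among Serre $\otimes$-ideals avoiding~$\cat{S}$'' to ``maximal among all proper Serre $\otimes$-ideals''. But your rigidity argument for that upgrade contains a genuine error: you claim that rigidity of~$\hat{s}$ forces $\hat{\unit}$ to be a direct summand of~$\hat{s}^\vee\otimes\hat{s}$. This is false. What rigidity gives (as recorded in \Cref{Rem:rigid}) is that $\hat{s}\otimes\hat\eta_s\colon \hat{s}\to \hat{s}\otimes\hat{s}^\vee\otimes\hat{s}$ is a split monomorphism, \ie $\hat{s}$ is a summand of $\hat{s}\otimes\hat{s}^\vee\otimes\hat{s}$. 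It does \emph{not} say that $\hat\eta_s\colon\hat\unit\to\hat{s}^\vee\otimes\hat{s}$ itself is split mono. Indeed this fails already for $\cK=\Dperf(R)$ and $s$ any perfect complex with proper support: the unit map $\eta_s$ then has nonzero kernel, so $\hat\eta_s$ is not even a monomorphism in~$\mK$.

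The paper closes this gap with an extra saturation step that you are missing. Before attacking a strictly larger~$\cat{C}\supsetneq\cB$, one first shows that $\cB$ coincides with $\cB':=\SET{M\in\cA\fp}{\hat{x}\otimes M\in\cB\textrm{ for some }x\in\cat{S}}$, using $\otimes$-multiplicativity of~$\cat{S}$ and maximality of~$\cB$ in the poset. Rigidity then enters correctly: since $\hat{x}\otimes\ker(\hat\eta_x)=0\in\cB$ (because $\hat{x}\otimes\hat\eta_x$ is split mono), the saturation $\cB=\cB'$ yields $\ker(\hat\eta_x)\in\cB$ for every~$x\in\cat{S}$. Now if $\cat{C}\supsetneq\cB$ contains some~$\hat{s}$ with $s\in\cat{S}$, the exact sequence $0\to\ker(\hat\eta_s)\to\hat\unit\to\hat{s}^\vee\otimes\hat{s}$ has both outer terms in~$\cat{C}$ (the first from $\cB\subseteq\cat{C}$, the second from $\otimes$-ideality), so Serreness forces $\hat\unit\in\cat{C}$. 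The point is that $\hat\eta_s$ need not split, but its kernel already lies in~$\cB$ --- and proving \emph{that} is where the work is.
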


\begin{proof}
By Zorn, there exists $\cB$ maximal among the Serre $\otimes$-ideals which avoid~$\cat{S}$ and contain~$\cat{B}_0$. So we have $\cB\supseteq\cB_0$ and $\cB\cap \yoneda(\cat{S})=\varnothing$ and we are left to prove that $\cB$ is plain maximal in~$\cA\fp$. Consider $\cat{B}':=\SET{M\in\cat{A}\fp}{\hat x\otimes M\in\cB\textrm{ for some }x\in \cat{S}}$. Since $\cat{S}$ is $\otimes$-multiplicative and each $\hat x$ is $\otimes$-flat, the subcategory $\cB'\subsetneq\cA\fp$ is a Serre $\otimes$-ideal avoiding~$\cat{S}$ and containing~$\cB_0$. By maximality of~$\cB$ among those, the relation $\cat{B}\subseteq\cat{B}'$ forces $\cat{B}=\cB'$. In particular, $M=\ker(\hat\eta_x\colon \hat \unit \to \hat x^\vee\otimes \hat x)$ belongs to~$\cB$ for every $x\in \cat{S}$ since $\hat{x}\otimes M=0$ by rigidity (\Cref{Rem:rigid}). Let us show that $\cB\subset\cat{A}\fp$ is a maximal Serre $\otimes$-ideal by showing that a strictly bigger Serre $\otimes$-ideal $\cat{C}\supsetneq\cat{B}$ of~$\cat{A}\fp$ must be~$\cA\fp$ itself. Since $\cB$ is maximal among those avoiding~$\cat{S}$ and containing~$\cB_0$, such a strictly bigger~$\cat{C}$ cannot avoid~$\cat{S}$. Therefore $\cat{C}$ contains some $\hat x$ for $x\in \cat{S}$ and, by the above discussion, we also have $\ker(\hat \eta_x\colon \hat \unit\to \hat x^\vee\otimes \hat x)\in\cB\subseteq\cat{C}$. So in the exact sequence $0\to \ker(\hat \eta_x) \to \hat\unit \to \hat x^\vee\otimes\hat x$ we have $\ker(\hat\eta_x)$ and $\hat x$ in~$\cat{C}$. This forces $\hat \unit\in \cat{C}$ by Serritude and therefore $\cat{C}=\cat{A}\fp$ as wanted.
\end{proof}

\begin{Cor}
\label{Cor:surj}%
Suppose that $\cat{K}$ is rigid. Then the map $\cB\mapsto \cP(\cB)$ from homological primes to triangular primes as in \Cref{Prop:field-to-prime} (\ie the comparison map $\phi\colon\Spch(\cat{K})\to \SpcK$ of \Cref{Rem:Spc'}) is surjective. That is, every triangular prime~$\cat{P}\in\SpcK$ is of the form $\cat{P}=\yoneda\inv(\cB)$ for some maximal Serre $\otimes$-ideal~$\cB$ in~$\mK$.
\end{Cor}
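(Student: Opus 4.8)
The plan is to deduce \Cref{Cor:surj} from \Cref{Lem:exist} by a judicious choice of the multiplicative set~$\cat{S}$ and the initial Serre $\otimes$-ideal~$\cat{B}_0$. Given a triangular prime $\cat{P}\in\SpcK$, the natural $\otimes$-multiplicative class to consider is its complement $\cat{S}:=\cK\oursetminus\cat{P}$; indeed, $\unit\in\cat{S}$ since $\cat{P}$ is proper, and $x\otimes y\in\cat{S}$ implies $x\in\cat{S}$ and $y\in\cat{S}$ precisely because $\cat{P}$ is prime (so $\cat{S}$ is $\otimes$-multiplicative, even more than required). For the starting point I would take $\cat{B}_0$ to be the smallest Serre $\otimes$-ideal of~$\mK$ containing $\yoneda(\cat{P})$, \ie the Serre envelope of $\yoneda(\cat{P})$; by \Cref{Prop:Mod-quotients} applied to the thick $\otimes$-ideal $\cat{J}=\cat{P}\subset\cK$ with Verdier quotient $q\colon\cK\to\cat{L}=(\cK/\cat{P})^\natural$, this $\cat{B}_0$ is exactly $\Ker(q_!)\fp$.

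The key point to verify is that $\cat{B}_0$ avoids~$\cat{S}$, \ie $\cat{B}_0\cap\yoneda(\cat{S})=\varnothing$. Concretely, for $x\in\cat{S}=\cK\oursetminus\cat{P}$ one must show $\hat{x}\notin\cat{B}_0$. Using the identification from \Cref{Prop:Mod-quotients} that $q_!\colon\mK\to\mmod\cat{L}$ is a Gabriel quotient with kernel $\cat{B}_0$, we have $\hat{x}\in\cat{B}_0$ if and only if $q_!(\hat x)=\widehat{q(x)}=0$ in $\mmod\cat{L}$, equivalently $q(x)=0$ in $\cat{L}=(\cK/\cat{P})^\natural$ by faithfulness of Yoneda; but $q(x)=0$ means $x\in\cat{P}$, contradicting $x\in\cat{S}$. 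So $\cat{B}_0$ avoids~$\cat{S}$, and \Cref{Lem:exist} (which is where rigidity of~$\cK$ is used) produces a maximal Serre $\otimes$-ideal $\cB\supseteq\cB_0$ with $\cB\cap\yoneda(\cat{S})=\varnothing$, \ie a homological prime. It remains to check $\yoneda\inv(\cB)=\cat{P}$: the inclusion $\cat{P}\subseteq\yoneda\inv(\cB)$ holds since $\yoneda(\cat{P})\subseteq\cat{B}_0\subseteq\cB$, while the reverse inclusion $\yoneda\inv(\cB)\subseteq\cat{P}$ holds because $\cB$ avoids $\yoneda(\cat{S})$, so no object of $\cat{S}=\cK\oursetminus\cat{P}$ can have its image in~$\cB$. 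Hence $\cat{P}=\cat{P}(\cB)=\yoneda\inv(\cB)$ is in the image of~$\phi$, and $\phi$ is surjective.

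The main obstacle is really the bookkeeping in the middle step — making sure that the Serre $\otimes$-envelope $\cat{B}_0$ of $\yoneda(\cat{P})$ genuinely does not meet $\yoneda(\cK\oursetminus\cat{P})$. A naive argument ``$\yoneda(\cat{P})$ avoids $\yoneda(\cat{S})$ because $\cat{P}\cap\cat{S}=\varnothing$'' is \emph{not} enough, since passing to the Serre envelope could a priori swallow some $\hat{x}$ with $x\notin\cat{P}$; this is exactly why one routes the argument through \Cref{Prop:Mod-quotients}, whose content is precisely that the Serre envelope of $\yoneda(\cat{P})$ in $\mK$ is controlled by the Verdier quotient $\cK\to(\cK/\cat{P})^\natural$ on the nose, so that membership in $\cat{B}_0$ translates into vanishing in $(\cK/\cat{P})^\natural$. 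Once that dictionary is in place, everything else is formal: the choice of $\cat{S}$ as the complement of a prime, an application of \Cref{Lem:exist}, and a two-line sandwich argument to pin down $\yoneda\inv(\cB)$.
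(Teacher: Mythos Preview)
Your proof is correct and uses the same two ingredients as the paper, \Cref{Lem:exist} and \Cref{Prop:Mod-quotients}, just packaged slightly differently. The paper first passes to the local category $\cL=(\cK/\cP)^\natural$ via \Cref{Cor:Spc'(K/J)} (itself a consequence of \Cref{Prop:Mod-quotients}) and then applies \Cref{Lem:exist} there with the trivial choices $\cB_0=0$ and $\cat{S}=\cL\oursetminus\{0\}$; you instead apply \Cref{Lem:exist} directly in $\mK$ with $\cB_0$ the Serre envelope of $\yoneda(\cP)$ and $\cat{S}=\cK\oursetminus\cP$, invoking \Cref{Prop:Mod-quotients} to check the avoidance condition --- the two arguments unwind to the same thing.
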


\begin{proof}
Consider the quotient $\cat{K}/\cP$ (or its idempotent completion $\cat{K}_\cP:=(\cK/\cP)^\natural$). The map $\Spc(q)\colon \Spc(\cat{K}/\cat{P})\hook \SpcK$ sends $0$ to $q\inv(0)=\cP$. So, by \Cref{Cor:Spc'(K/J)} applied to $\cat{J}=\cat{P}$, it suffices to prove the result for $\cP=0$. In that case, $\cat{K}$ is \emph{local}, meaning that $0$ is a prime: $x\otimes y=0\Rightarrow x=0$ or $y=0$. We conclude by Lemma~\ref{Lem:exist} for $\cB_0=0$ and $\cat{S}=\cat{K}\oursetminus\{0\}$ which is $\otimes$-multiplicative because $\cK$ is local. (\footnote{\,The argument was already used in~\cite[Cor.\,4.9]{BalmerKrauseStevenson19}, which in turn inspired \Cref{Lem:exist}.})
\end{proof}

\begin{Rem}
\label{Rem:fields-vs-fields}%
There are a few differences between our approach to homological residue fields and the treatment in~\cite[\S\,4]{BalmerKrauseStevenson19}. First, the whole~\cite{BalmerKrauseStevenson19} is written for a `big' (\ie rigidly-compactly generated) tensor-triangulated category~$\cat{T}$ and the modules are taken over its rigid-compact objects~$\cat{K}:=\cat{T}^c$. This restriction is unimportant, certainly as far as most examples are concerned.

Another difference is that, in \loccit, we focussed on a \emph{local} category in the sense that $\SpcK$ is a local space, \ie has a unique closed point~$\cat{M}=0$. We then considered quotients of the module category $\cat{A}\onto \cat{A}/\Locat{B}$ for $\cat{B}\subseteq\cat{A}\fp$ maximal \emph{among those which meet~$\cat{K}$ trivially}, \ie $\cat{B}\cap \yoneda(\cat{K})=\{0\}$. This property means that the homological prime $\cB$ lives \emph{above the closed point} of~$\SpcK$ in the sense of \Cref{Def:field-above-W}. Equivalently, it means that the functor $\boneda_{\cB}\colon\cK\to \AKB$ is conservative, \ie detects isomorphisms. All these properties are reminiscent of commutative algebra, where the residue field of a local ring~$R$ is indeed conservative on perfect complexes and maps the unique prime of the field to the closed point of~$\Spec(R)$.

Continuing the analogy with commutative algebra, when dealing with a \emph{global} (\ie not necessarily local) category~$\cat{K}$, we can analyze it one prime at a time. For each $\cP\in\SpcK$ we can consider the local category~$\cat{K}_\cP=(\cat{K}/\cP)^\natural$. By \Cref{Cor:Spc'(K/J)} we can identify the homological primes~$\cat{C}$ of this local category~$\cK_\cP$ with a subset of those of the global category. Requesting that the local prime~$\cat{C}$ lives `above the closed point' of~$\Spc(\cK_\cP)$ as we did in~\cite{BalmerKrauseStevenson19} amounts to requesting that the corresponding global prime $\cB=(q_!)\inv(\cat{C})$ lives exactly above the point~$\cP$ in~$\SpcK$.

In other words, in \Cref{Def:homol-fields} we are considering all homological residue fields $\boneda_{\cB}\colon\cK\to \AKB$ at once but we can also regroup them according to the associated triangular primes $\yoneda\inv(\cB)$, in which case we obtain the constructions of~\cite{BalmerKrauseStevenson19} for the local category~$\cat{K}_\cP$.
\end{Rem}

\bigbreak\goodbreak
\section{The Nilpotence Theorems}
\label{sec:Nil-Thms}%
\medbreak

In this section, we assume $\cat{K}$ rigid.

Let us prove \Cref{Thm:nil-flat-intro}, in a strong form `with parameter'. Recall that an object $F$ in a tensor abelian category~$\cA$ is \emph{$\otimes$-flat} if $F\otimes-\colon\cA\to \cA$ is exact.

\begin{Thm}
\label{Thm:main}%
Let $\cat{K}$ be an essentially small, rigid tensor-triangulated category (\Cref{Hyp:0}) and $W\subseteq\SpcK$ a closed subset (the `parameter'). Let $f\colon \hat{x}\to F$ be a morphism in $\cA=\MK$ satisfying the following hypotheses:
\begin{enumerate}[\rm(i)]
\item
The source of~$f$ comes from an object $x\in\cat{K}$ via Yoneda, as indicated above.
\item
Its target $F$ is $\otimes$-flat in~$\cA$.
\item
\label{it:nil-iii}%
The morphism $f$ \emph{vanishes in every homological residue field over~$W$} in the following sense: For every maximal Serre $\otimes$-ideal $\cB\subset\mK$ living over~$W$ (\Cref{Def:field-above-W}) we have $Q_\cB(f)=0$ in $\AKB=(\MK)/\Locat{B}$.
\end{enumerate}
Then:
\begin{enumerate}[\rm(a)]
\item
\label{it:nil-a}%
There exist an object $s\in\cat{K}$ such that $\supp(s)\supseteq W$ and an integer $n\ge 1$ such that $f\potimes{n}\otimes \hat{s}=0$ in~$\cat{A}$.
\smallbreak
\item
\label{it:nil-b}%
For any object $s$ as above, if we let $Z=\supp(s)\supseteq W$, then for every $z\in\cK_Z=\SET{z\in\cK}{\supp(z)\subseteq Z}$ there exists $m\ge 1$ with $f\potimes{m}\otimes\hat{z}=0$.
\end{enumerate}
\end{Thm}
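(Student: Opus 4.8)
\textbf{Proof plan for \Cref{Thm:main}.}

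The plan is to prove part~\eqref{it:nil-a} first via a Zorn/compactness argument on the spectrum, and then derive part~\eqref{it:nil-b} as a formal consequence of rigidity and the tensor-ideal structure. For~\eqref{it:nil-a}, I would consider the full subcategory
\[
\cat{B}_0 := \SET{M\in\mK}{f\potimes{n}\otimes M\otimes\hat{y}=0\text{ in }\cat{A}\text{ for some }n\ge1,\ \text{uniformly in a suitable sense}}\,,
\]
or, more cleanly, work with the class of objects $s\in\cK$ for which some $\otimes$-power $f\potimes{n}\otimes\hat{s}$ vanishes and show this class is large enough to cover $W$. The key point is that since $\hat{x}$ is rigid in~$\cat{A}$ (\Cref{Rem:Freyd}), by the adjunction trick of \Cref{Rem:adj-potimes} we may replace $f\colon\hat{x}\to F$ by $g\colon\hat{\unit}\to \hat{x}^\vee\otimes F$; note $\hat{x}^\vee\otimes F$ is still $\otimes$-flat since $\hat{x}^\vee$ is rigid hence flat, so without loss of generality $x=\unit$ and $f\colon\hat\unit\to F$. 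Then $f\potimes{n}$ is just the composite of the appropriate instances of $f\otimes\mathrm{id}$, and the ``coker'' of $f$ enters the picture: let $C=\coker(f\colon\hat\unit\to F)$ in~$\cat{A}$.

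The heart of the argument is then to show that for every maximal Serre $\otimes$-ideal $\cB$ living over~$W$, the object $C$ lies in $\Locat{B}$, equivalently $Q_\cB(C)\neq$ --- wait, rather: $Q_\cB(f)=0$ together with $F$ flat (so $Q_\cB$ preserves the relevant exactness, \Cref{Rem:B}) should force that the ``homological support'' of the relevant obstruction object is disjoint from the set of homological primes over~$W$. Concretely, I would build, for the local category $\cK_\cP$ with $\cP\in W$, using \Cref{Cor:Spc'(K/J)} to pass to $\cL=\cK_\cP$, the tensor-ideal $\cat{C}_\cP=\SET{M\in\mmod\cat{L}}{M\otimes(\text{some power-related object})=0}$ and invoke \Cref{Lem:exist}: \emph{if} no power $f\potimes{n}\otimes\hat{s}$ killed the fiber over $\cP$, then the multiplicative set of objects $s$ with $\supp(s)\not\ni\cP$ could be enlarged, inside $\mmod\cat{L}$, to a Serre $\otimes$-ideal avoiding the image of $\cat{S}$, and Zorn would produce a maximal Serre $\otimes$-ideal $\cB$ over $\cP\in W$ with $Q_\cB(f)\neq0$, contradicting~\eqref{it:nil-iii}. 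This shows that for each $\cP\in W$ there is $s_\cP\in\cK$ with $\cP\in\supp(s_\cP)$ and $f\potimes{n_\cP}\otimes\hat{s}_\cP=0$. Since $W$ is a \emph{closed}, hence Thomason-closed, subset of the spectral space $\SpcK$ (\Cref{Rem:tt}), its complement is quasi-compact, so $W$ is covered by finitely many $\supp(s_{\cP_1}),\dots,\supp(s_{\cP_r})$; taking $s:=s_{\cP_1}\oplus\cdots\oplus s_{\cP_r}$ (or better $s_{\cP_1}\otimes\cdots\otimes s_{\cP_r}$ to make a single power work — one must be slightly careful, but finitely many powers combine) we get $\supp(s)\supseteq W$ and a single $n$ with $f\potimes{n}\otimes\hat{s}=0$.

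For part~\eqref{it:nil-b}: given such an $s$ with $Z=\supp(s)$ and $f\potimes{n}\otimes\hat s=0$, and given $z\in\cK_Z$, I would use the classification of thick $\otimes$-ideals at the level of supports: $\supp(z)\subseteq Z=\supp(s)$ means $z$ lies in the radical thick $\otimes$-ideal generated by $s$, so by the standard tensor-triangular fact (\ie $z$ is a direct summand of an object obtained from finitely many cones and $\otimes$-multiples of $s$) some $\otimes$-power $z\potimes{k}$ is in the \emph{thick $\otimes$-ideal} generated by $s$ --- in fact, using rigidity of $z$ (\Cref{Rem:rigid}), $z$ itself is a direct summand of $z\potimes{k}\otimes(\text{dual stuff})$, so it suffices to kill a power of $f$ tensored with the thick-ideal-generated-by-$s$ object. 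Since $f\potimes{n}\otimes\hat{s}=0$ in the abelian category $\cat{A}$, and $\cat{A}$-tensor is right-exact and compatible with Yoneda, tensoring with cones and summands built from $\hat{s}$ only costs further powers of $f$ (each distinguished triangle $\hat s\to ?\to ?$ contributes a bounded amount); a finite induction on the building process yields $m\ge1$ with $f\potimes{m}\otimes\hat{z}=0$. The bookkeeping of how powers of $f$ accumulate along cones is the one genuinely fiddly point, handled exactly as in the classical nilpotence arguments of Neeman and Thomason.

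\textbf{Main obstacle.} The delicate step is \eqref{it:nil-a}: showing that a putative non-nilpotence at a prime $\cP\in W$ produces, via \Cref{Lem:exist} applied in $\mmod(\cK_\cP)$, an actual maximal Serre $\otimes$-ideal $\cB$ over $\cP$ on which $Q_\cB(f)\neq0$. This requires choosing the right $\otimes$-multiplicative class $\cat{S}$ and the right seed ideal $\cat{B}_0$ in the Freyd envelope so that ``$Q_\cB(f)=0$'' translates precisely into a membership condition the maximal $\cB$ cannot satisfy; the flatness hypothesis on $F$ is exactly what makes the candidate subcategory $\SET{M}{f\potimes{?}\otimes M\in\cB}$ Serre (cf. the proofs of \Cref{Prop:field-to-prime} and \Cref{Lem:exist}), and getting the interaction with the Gabriel quotient $\Locat{B}$ versus its finitely-presented part $\cB$ exactly right is where care is needed.
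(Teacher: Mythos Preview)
Your proposal has the right scaffolding (reduce to $x=\unit$ via \Cref{Rem:adj-potimes}, use flatness of $F$ to get a Serre \tensid, invoke \Cref{Lem:exist}), but there are two genuine gaps.

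\textbf{The compactness step fails.} You write ``$W$ is closed, hence Thomason-closed'', but that implication is false: \Cref{Rem:tt} says the Thomason-closed subsets are exactly the $\supp(x)$, and a general closed $W$ need not be of this form. Even granting it, ``complement quasi-compact'' does not let you extract a finite subcover of $W$ by the \emph{closed} sets $\supp(s_{\cP_i})$; compactness goes the wrong way. So your prime-by-prime patching cannot close. The paper avoids this entirely by working globally: it takes the $\otimes$-multiplicative set $\cat{S}=\SET{s\in\cK}{\supp(s)\supseteq W}$ and the single Serre \tensid\ $\cat{B}_0=\SET{M\in\cA\fp}{f\potimes{n}\otimes M=0\text{ for some }n}$, and applies \Cref{Lem:exist} once. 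The output is one object $s\in\cat S$ with $\supp(s)\supseteq W$, not a finite patching --- which is exactly why the conclusion of~\eqref{it:nil-a} reads $\supp(s)\supseteq W$ rather than~$=W$.

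\textbf{The central computation is missing.} You assert that if no power of $f$ dies over~$\cP$, Zorn produces a homological prime $\cB$ over~$\cP$ with $Q_\cB(f)\neq0$, but you never say \emph{why} the $\cB$ you build has $Q_\cB(f)\neq0$. Containing $\cB_0$ or avoiding~$\cat{S}$ does not obviously control $Q_\cB(f)$. The paper's key idea, which your sketch does not reach, is the following: from the tautological diagram obtained by tensoring the exact sequence $0\to\ker(f)\to\hat\unit\xrightarrow{f}F$ with $f$ itself (here it is essential that the source is $\hat\unit$), one reads off $f\otimes\ker(f)=0$. Since $\ker(f)\subseteq\hat\unit$ is the filtered union of its finitely presented subobjects~$M$, and each such $M$ satisfies $f\otimes M=0$ (using $F$ flat to keep $1\otimes i$ mono), every such $M$ lies in $\cB_0\subseteq\cB$, hence $\ker(f)\in\Locat{B}$. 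Thus $Q_\cB(f)$ is a \emph{monomorphism}; combined with the hypothesis $Q_\cB(f)=0$ this forces $\hat\unit\in\cB$, the sought contradiction. Your mention of $\coker(f)$ is a red herring; the argument runs through~$\ker(f)$.

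For~\eqref{it:nil-b}, your cone-by-cone induction with accumulating powers can be made to work, but it is unnecessarily laborious. The paper simply observes that $\yoneda\inv(\cB_0)$ is a thick \tensid\ of~$\cK$ (because $\cB_0$ is Serre and Yoneda is homological and monoidal); since it contains~$s$ it contains all of~$\cK_Z$, and you are done in one line.
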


\begin{proof}
By \Cref{Rem:adj-potimes}, we can and shall assume that $x=\unit$. So $f\colon\hat\unit\to F$. Consider
\[
\cat{S}:=\SET{s\in \cK}{\supp(s)\supseteq W}\,.
\]
This is a $\otimes$-multiplicative class of objects of~$\cK$ since $\supp(s_1\otimes s_2)=\supp(s_1)\cap \supp(s_2)$. Since $W$ is closed and $\{\supp(s)\}_{s\in\cK}$ is a basis of closed subsets, we have $\cap_{s\in \cat{S}}\supp(s)=W$. On the other hand, consider the following subcategory of finitely presented $\cK$-modules
\[
\cat{B}_0:=\SET{M\in\cat{A}\fp}{f\potimes{n}\otimes M=0\textrm{ in }\cat{A}\textrm{ for some }n\ge 1}.
\]
Note that $\cat{B}_0$ is a Serre $\otimes$-ideal. This uses that $F$ is $\otimes$-flat in~$\cat{A}$ and was already proved in~\cite[Lemma~4.17]{BalmerKrauseStevenson19}. In particular, when we prove that $\cB_0$ is closed under extension, if $0\to M_1\to M_2\to M_3\to 0$ is an exact sequence in~$\cA\fp$ and if $f\potimes{n_1}\otimes M_1=0$ and $f\potimes{n_3}\otimes M_3=0$ then we show that $f\potimes{(n_1+n_3)}\otimes M_2=0$. This is the place where nilpotence is needed, as opposed to mere vanishing.

If $\cB_0$ meets $\yoneda(\cat{S})$ we obtain the conclusion of part~\eqref{it:nil-a}. Suppose \ababs, that $\cB_0\cap \yoneda(\cat{S})=\varnothing$. By \Cref{Lem:exist} there exists a homological prime $\cB\in\Spch(\cK)$ containing $\cB_0$ and still avoiding~$\cat{S}$. The latter property $\cB\cap \yoneda(\cat{S})=\varnothing$ means that the triangular prime $\yoneda\inv(\cB)$ belongs to the subset $\SET{\cP\in\SpcK}{\cP\cap \cat{S}=\varnothing}=\SET{\cP\in\SpcK}{s\notin\cP, \ \forall\ s\in \cat{S}}=\cap_{s\in \cat{S}}\supp(s)=W$, as we proved above from the definition of~$\cat{S}$. So $\cB$ lives over~$W$ and we trigger hypothesis~\eqref{it:nil-iii} for that~$\cB$, namely that we have $Q_\cB(f)=0$ in~$\AKB$.

Consider now the kernel of~$f\colon \hat\unit\to F$ in~$\cA$ and the commutative diagram:
\[
\xymatrix{
0 \ar[r]
& \ker(f) \ar[r]^-{j} \ar[d]_-{f\otimes 1}
& \hat \unit \ar[r]^-{f} \ar[d]_-{f}
& F \ar[d]_-{f\otimes 1}
\\
0 \ar[r]
& F\otimes \ker(f) \ar[r]^-{1\otimes j}
& F \ar[r]^-{1\otimes f}
& F\otimes F
\\
}
\]
whose first row is the obvious one; the diagram is obtained by tensoring that first row with $f\colon \hat \unit\to F$ itself (on the left) and using that $\hat\unit$ is the $\otimes$-unit in~$\cat{A}$. It is essential here that the source of~$f$ is~$\hat\unit$ (which we achieved through rigidity), and not some random object. Indeed, the diagonal of the left-hand square is now $f\circ j=0$. Since the lower row is exact ($F$ $\otimes$-flat again), we conclude that $f\otimes \ker(f)=0$. We cannot jump to the conclusion that $\ker(f)\in\cB$ since $\ker(f)$ is not finitely presented. However, $\ker(f)\into \hat \unit$ is a sub-object of a finitely presented object, hence it is the \emph{union} of its finitely presented subobjects as in~\cite[Lemma~3.9]{BalmerKrauseStevenson17app}, \ie
\[
\ker(f)=\colim_{\genfrac{}{}{0pt}{1}{M\into \ker(f)}{M\in\cat{A}\fp}} M\,.
\]
For any such $i\colon M\into \ker(f)$ with $M\in\cat{A}\fp$, we have a commutative square obtained by tensoring $i\colon M\into \ker(f)$ with $f\colon \hat \unit\to F$:
\[
\xymatrix{
M \ar@{ >->}[r]^-{i} \ar[d]_-{f\otimes 1}
& \ker(f) \ar[d]^-{f\otimes 1=0}
\\
F\otimes M \ar@{ >->}[r]^-{1\otimes i}
& F\otimes \ker(f)\,.
}
\]
Note that the bottom map remains a monomorphism because $F$ is $\otimes$-flat. The vanishing of the right-hand vertical map, proved above, gives us $f\otimes M=0$, which means $M\in\cat{B}_0\subseteq\cat{B}$. It follows that $\ker(f)$ is a colimit of objects~$M\in\cB$ and therefore belongs to~$\Locat{B}$. Applying the exact functor $Q_{\cat{B}}\colon \cat{A}\onto \bat{A}=\cat{A}/\Locat{B}$ to the morphism $f$, we have just proved that $Q_{\cat{B}}(f)$ has trivial kernel, \ie it is a monomorphism $Q_{\cat{B}}(\hat \unit)\into Q_{\cat{B}}(F)$ in~$\bat{A}$. But this monomorphism $Q_{\cat{B}}(f)$ is also zero by assumption~\eqref{it:nil-iii} on~$f$ that we triggered in the first part of the proof. This forces $Q_{\cB}(\hat\unit)=0$ and thus $\hat\unit\in\cB$, a contradiction.

This finishes the proof of part~\eqref{it:nil-a}. To deduce~\eqref{it:nil-b}, we use the fact that $\cB_0=\SET{M\in\cat{A}\fp}{f\potimes{m}\otimes M=0\textrm{ for some }m\ge1}$ is a Serre $\otimes$-ideal, as we saw above, by~\cite[Lem.\,4.17]{BalmerKrauseStevenson19}. Therefore $\yoneda\inv(\cat{B}_0)$ is a thick $\otimes$-ideal of~$\cK$ and so the fact that $\yoneda\inv(\cB_0)$ contains~$s$ implies that it contains the whole thick $\otimes$-ideal of~$\cK$ generated by~$s$, namely exactly $\cat{K}_{Z}$ where $Z=\supp(s)$, see~\cite[\S\,4]{Balmer05a}.
\end{proof}

We can then deduce the form announced in \Cref{Thm:nil-flat-intro}:

\begin{Cor}
\label{Cor:main}%
Let $f\colon \hat x\to F$ be a morphism in $\cA=\MK$, with $x\in\cat{K}$ and with $F$ $\otimes$-flat in~$\cA$. Suppose that for every homological prime $\cB\subset\mK$ we have $Q_\cB(f)=0$ in $\AKB$. Then there exists $m\ge 1$ such that $f\potimes{m}=0$ in~$\cat{A}$.
\end{Cor}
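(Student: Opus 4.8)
The plan is to obtain \Cref{Cor:main} as the special case $W=\varnothing$ of \Cref{Thm:main}, together with a small argument to turn the conclusion ``$f^{\otimes n}\otimes\hat s=0$'' into the cleaner ``$f^{\otimes m}=0$''. First I would observe that when $W=\varnothing$ the living-over condition in hypothesis~\eqref{it:nil-iii} of \Cref{Thm:main} is vacuous: every homological prime $\cB\subset\mK$ trivially lives over~$\varnothing$ in the sense of \Cref{Def:field-above-W} (the condition $\cP(\cB)\in W$ is only ever invoked via the set $\cat{S}$, and for $W=\varnothing$ one has $\cat{S}=\cK$). Hence the present hypothesis --- $Q_\cB(f)=0$ for \emph{every} homological prime~$\cB$ --- is exactly hypothesis~\eqref{it:nil-iii} of \Cref{Thm:main} with $W=\varnothing$. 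Hypotheses~(i) and~(ii) of \Cref{Thm:main} are verbatim the hypotheses of the corollary. So \Cref{Thm:main}\eqref{it:nil-a} applies and yields an object $s\in\cK$ with $\supp(s)\supseteq\varnothing$ (no constraint) and an integer $n\ge1$ with $f^{\otimes n}\otimes\hat s=0$ in~$\cA$.

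Next I would upgrade this to $f^{\otimes m}=0$. The object $s$ above is a priori arbitrary, but I can now invoke part~\eqref{it:nil-b} of \Cref{Thm:main}: with $Z=\supp(s)$, every object $z\in\cK$ with $\supp(z)\subseteq Z$ satisfies $f^{\otimes m}\otimes\hat z=0$ for some~$m$. The natural move is to take $z=\unit$. For this I need $\supp(\unit)\subseteq Z$, i.e.\ $Z=\supp(s)=\SpcK$; in general this need not hold for the particular~$s$ produced. The clean way around this is to run the argument of \Cref{Thm:main} with the full spectrum as parameter: apply \Cref{Thm:main} instead with $W=\varnothing$ but then note that in \emph{part}~\eqref{it:nil-a}'s proof the relevant multiplicative set was $\cat{S}=\{s\mid \supp(s)\supseteq W\}$, which for $W=\varnothing$ is all of~$\cK$; in particular $\unit\in\cat{S}$. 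So either $\cB_0$ meets $\yoneda(\cat{S})$ --- in which case, since $\unit\in\cat{S}$, one may hope $\hat\unit\in\cB_0$ --- or one reaches the contradiction as in the proof. The subtlety is that $\cB_0\cap\yoneda(\cat{S})\ne\varnothing$ only guarantees \emph{some} $\hat s\in\cB_0$, not $\hat\unit\in\cB_0$. Thus I fall back to \eqref{it:nil-b}: having found $s$ with $f^{\otimes n}\otimes\hat s=0$, set $Z=\supp(s)$; then $\yoneda^{-1}(\cB_0)$ is a thick $\otimes$-ideal containing $s$, hence containing~$\cK_Z$, and in particular every $z$ with $\supp(z)\subseteq Z$ gives $f^{\otimes m}\otimes\hat z=0$.

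To close the gap I would instead re-select the parameter: apply \Cref{Thm:main} with $W=\varnothing$ to get such an~$s$, and observe that actually the construction shows more. Concretely: since for $W=\varnothing$ we have $\cat{S}=\cK\ni\unit$, the dichotomy in the proof of \eqref{it:nil-a} is ``$\cB_0$ meets $\yoneda(\cK)$'' versus ``contradiction''. The second branch is impossible, so $\cB_0\cap\yoneda(\cK)\ne\varnothing$, i.e.\ there is a nonzero $s\in\cK$ with $\hat s\in\cB_0$, meaning $f^{\otimes n}\otimes\hat s=0$. Now $\yoneda^{-1}(\cB_0)$ is a nonzero thick $\otimes$-ideal of~$\cK$. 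If $\cK$ happens to be generated by any single object whose support is all of~$\SpcK$ --- but we cannot assume that. So the honest route is: \Cref{Cor:main} is simply \Cref{Thm:main} with $W=\SpcK$. Indeed, with $W=\SpcK$, hypothesis~\eqref{it:nil-iii} again reads ``$Q_\cB(f)=0$ for every homological prime~$\cB$'' (every $\cB$ lives over~$\SpcK$), matching the corollary; and $\cat{S}=\{s\mid\supp(s)=\SpcK\}\ni\unit$, so part~\eqref{it:nil-a} directly gives an $s$ with $\supp(s)\supseteq\SpcK$, whence $\supp(\unit)\subseteq\supp(s)=\SpcK=Z$, and part~\eqref{it:nil-b} applied to $z=\unit$ produces $m\ge1$ with $f^{\otimes m}=f^{\otimes m}\otimes\hat\unit=0$ in~$\cA$.

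The only step requiring care is confirming that ``$\cB$ lives over $\SpcK$'' is automatic for every homological prime~$\cB$, so that hypothesis~\eqref{it:nil-iii} of \Cref{Thm:main} with $W=\SpcK$ is equivalent to the corollary's blanket hypothesis. This is immediate from \Cref{Def:field-above-W}: $\cP(\cB)=\yoneda^{-1}(\cB)\in\SpcK$ always, by \Cref{Prop:field-to-prime}. I expect no real obstacle; the corollary is a formal consequence of \Cref{Thm:main}, the main (and entirely routine) point being the bookkeeping that $W=\SpcK$ makes~$\cat{S}$ contain~$\unit$ so that part~\eqref{it:nil-b} can be fed $z=\unit$.

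\begin{proof}
Apply \Cref{Thm:main} with the parameter $W=\SpcK$. Hypotheses~(i) and~(ii) of \Cref{Thm:main} are exactly those of the present corollary. For hypothesis~\eqref{it:nil-iii}, note that every homological prime $\cB\subset\mK$ automatically lives over $W=\SpcK$ in the sense of \Cref{Def:field-above-W}, since $\cP(\cB)=\yoneda\inv(\cB)\in\SpcK$ by \Cref{Prop:field-to-prime}; hence the assumption that $Q_\cB(f)=0$ for \emph{every} homological prime~$\cB$ is precisely hypothesis~\eqref{it:nil-iii} of \Cref{Thm:main} for this~$W$.

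By \Cref{Thm:main}\eqref{it:nil-a}, there exist $s\in\cat{K}$ with $\supp(s)\supseteq W=\SpcK$ and $n\ge 1$ with $f\potimes{n}\otimes\hat{s}=0$ in~$\cat{A}$. Thus $Z:=\supp(s)=\SpcK$. Since $\supp(\unit)\subseteq\SpcK=Z$, the object $z=\unit$ lies in $\cat{K}_Z$, so \Cref{Thm:main}\eqref{it:nil-b} provides $m\ge 1$ with $f\potimes{m}\otimes\hat\unit=0$. As $\hat\unit$ is the $\otimes$-unit of~$\cat{A}$, this says $f\potimes{m}=0$ in~$\cat{A}$.
\end{proof}
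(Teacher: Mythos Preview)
Your final proof is correct and takes exactly the paper's approach: apply \Cref{Thm:main} with $W=\SpcK$, so that $Z=\SpcK$ and one may take $z=\unit$ in part~\eqref{it:nil-b}. (The earlier exploration with $W=\varnothing$ was a dead end---note that \emph{no} homological prime lives over~$\varnothing$, since $\cP(\cB)\in\varnothing$ is impossible---but you correctly abandoned it.)
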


\begin{proof}
Apply \Cref{Thm:main} for $W=\SpcK$. Thus the subset $Z\supseteq W$ of Part~\eqref{it:nil-b} must be $Z=\SpcK$, and we can take $z=\unit$.
\end{proof}

\begin{Rem}
We stress that the `parameter'~$W$ in \Cref{Thm:main} is more flexible than the `parameter' in~\cite[Thm.\,3.8]{Thomason97}, where the closed subset $W$ is supposed to be the support of some object, \ie a Thomason closed subset (\Cref{Rem:tt}). In that case, any object $s$ with $\supp(s)=W$ will do, as follows from~\Cref{Thm:main}\,\eqref{it:nil-b}.
\end{Rem}

Of course, the easiest source of $\otimes$-flat objects in~$\MK$ is the Yoneda embedding:
\begin{Cor}
\label{Cor:main-K}%
Let $f\colon x\to y$ be a morphism in~$\cK$ and $W\subseteq\SpcK$ be a closed subset such that $\boneda_{\cB}(f)=0$ for every homological residue field corresponding to a homological prime~$\cB$ living above~$W$ (\Cref{Def:field-above-W}). Then there exists a Thomason closed subset $Z\supseteq W$ (\Cref{Rem:tt}) with the property that for every $z\in\cat{K}$ such that $\supp(z)\subseteq Z$ there exists $n\ge 1$ with $f\potimes{n}\otimes z=0$ in~$\cat{K}$. In particular, this holds for \emph{some} $z$ with $\supp(z)=Z\supseteq W$.
\end{Cor}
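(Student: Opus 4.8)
The plan is to deduce \Cref{Cor:main-K} from \Cref{Thm:main} by feeding in the $\otimes$-flat object $F = \hat{y}$ coming from Yoneda. First I would observe that, by \Cref{Rem:Freyd}, the object $\hat{y}\in\cat{A}=\MK$ is $\otimes$-flat whenever $y\in\cat{K}$, so hypothesis~(ii) of \Cref{Thm:main} is automatic. Hypothesis~(i) holds for $\hat{f}\colon\hat{x}\to\hat{y}$ since its source comes from $x\in\cat{K}$. For hypothesis~\eqref{it:nil-iii}, the assumption is that $\boneda_{\cB}(f)=0$ for every homological prime~$\cB$ living over~$W$; since $\boneda_{\cB}=Q_{\cB}\circ\yoneda$ by \Cref{Def:homol-fields}, this says precisely $Q_{\cB}(\hat{f})=0$ in $\AKB$, which is exactly~\eqref{it:nil-iii}. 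So all three hypotheses of \Cref{Thm:main} are met.

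Applying \Cref{Thm:main}\,\eqref{it:nil-a}--\eqref{it:nil-b} to $\hat{f}$, I obtain an object $s\in\cat{K}$ with $\supp(s)\supseteq W$ such that, setting $Z=\supp(s)$, for every $z\in\cat{K}_Z$ there is $m\ge1$ with $\hat{f}\potimes{m}\otimes\hat{z}=0$ in~$\cat{A}$. The remaining point is to descend this vanishing from $\cat{A}$ back to~$\cat{K}$. Here I would use that $\yoneda\colon\cat{K}\to\cat{A}$ is faithful (indeed the Yoneda embedding) and strong monoidal (\Cref{Rem:Freyd}): we have $\widehat{f\potimes{m}\otimes z}\cong\hat{f}\potimes{m}\otimes\hat{z}=0$, and faithfulness of $\yoneda$ on morphisms then forces $f\potimes{m}\otimes z=0$ in~$\cat{K}$ itself. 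Note $Z=\supp(s)$ is a Thomason closed subset by \Cref{Rem:tt}. Finally, taking $z=s$ (which lies in $\cat{K}_Z$ since $\supp(s)=Z$) gives the last sentence, with $\supp(z)=Z\supseteq W$.

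I do not expect a genuine obstacle here: the corollary is essentially a translation of \Cref{Thm:main} through the dictionary ``$\otimes$-flat objects in $\cat{A}$ $\supseteq$ image of Yoneda'' and the faithful strong monoidality of $\yoneda$. The only mild subtlety worth flagging explicitly is the passage $\widehat{f\potimes{m}\otimes z}=0 \Rightarrow f\potimes{m}\otimes z=0$, which rests on $\yoneda$ being faithful and a tensor functor; this is recorded in \Cref{Rem:Freyd} and requires no extra hypothesis on~$\cat{K}$ beyond what is already standing in this section. Everything else — that $Z$ is Thomason closed, that $s$ itself is an admissible choice of $z$ — is immediate from the definitions in \Cref{Rem:tt}.
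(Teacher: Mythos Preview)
Your proposal is correct and follows exactly the paper's own approach: apply \Cref{Thm:main} with $F=\hat{y}$ (which is $\otimes$-flat by \Cref{Rem:Freyd}) and then use faithfulness of the monoidal Yoneda embedding to pull the vanishing $\hat{f}\potimes{m}\otimes\hat{z}=0$ back into~$\cK$. The paper's proof is a one-sentence version of precisely what you wrote.
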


\begin{proof}
This is \Cref{Thm:main} for $F=\hat y$, which is $\otimes$-flat, combined with faithfulness of Yoneda $\yoneda\colon \cat{K}\hook\cat{A}$ to bring the conclusion back into~$\cK$.
\end{proof}

This specializes to the flagship Nilpotence \Cref{Thm:nil-compact}:
\begin{Cor}
\label{Cor:flagship}%
Let $f\colon x\to y$ be a morphism in~$\cK$ such that $\boneda(f)=0$ for every homological residue field~$\boneda$ of~$\cK$. Then there exists $n\ge 1$ with $f\potimes{n}=0$ in~$\cK$.
\end{Cor}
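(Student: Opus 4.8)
The plan is to obtain \Cref{Cor:flagship} as the special case of \Cref{Cor:main-K} in which the parameter $W$ is the entire spectrum $\SpcK$. First I would observe that the hypothesis---that $\boneda_{\cB}(f)=0$ for \emph{every} homological residue field $\boneda$ of~$\cat{K}$---is exactly the hypothesis of \Cref{Cor:main-K} with $W=\SpcK$, since every homological prime $\cB\in\Spch(\cK)$ trivially lives over $W=\SpcK$ (the associated triangular prime $\cP(\cB)=\yoneda\inv(\cB)$ of \Cref{Prop:field-to-prime} is \emph{some} prime, hence lies in $\SpcK$). So \Cref{Cor:main-K} applies and produces a Thomason closed subset $Z\supseteq W=\SpcK$; but a closed subset containing all of $\SpcK$ must equal $\SpcK$. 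Thus $Z=\SpcK=\supp(\unit)$.

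Next I would take $z=\unit$ in the conclusion of \Cref{Cor:main-K}. Since $\supp(\unit)=\SpcK=Z$, the object $z=\unit$ satisfies $\supp(z)\subseteq Z$, so there exists $n\ge 1$ with $f\potimes{n}\otimes\unit=0$ in~$\cat{K}$. Because $\unit$ is the $\otimes$-unit, this says precisely $f\potimes{n}=0$ in~$\cat{K}$, which is the desired conclusion.

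Alternatively, and perhaps more transparently, one can simply invoke \Cref{Cor:main}: the hypothesis gives $Q_\cB(f)=0$ in $\AKB$ for every homological prime $\cB$, where now $f$ is really the morphism $\yoneda(f)\colon\hat{x}\to\hat{y}$ in $\cat{A}=\MK$, whose target $\hat{y}$ is $\otimes$-flat by \Cref{Rem:Freyd}. Then \Cref{Cor:main} yields $m\ge 1$ with $\yoneda(f)\potimes{m}=0$ in~$\cat{A}$, and faithfulness of the Yoneda embedding $\yoneda\colon\cat{K}\hook\cat{A}$ (together with the fact that $\yoneda$ is monoidal, so $\yoneda(f\potimes{m})=\yoneda(f)\potimes{m}$) brings this back to $f\potimes{m}=0$ in~$\cat{K}$.

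There is essentially no obstacle here: the statement is a pure specialization of the already-established \Cref{Cor:main-K} (equivalently \Cref{Cor:main}), and all the real work---the Zorn's-lemma construction of homological primes avoiding a $\otimes$-multiplicative set (\Cref{Lem:exist}), the flatness bookkeeping, and the kernel-chasing diagram argument that converts vanishing in residue fields into $\otimes$-nilpotence---has been carried out in the proof of \Cref{Thm:main}. The only points worth spelling out are the trivial ones: that $W=\SpcK$ forces $Z=\SpcK$ and hence permits $z=\unit$, and that rigidity of~$\cat{K}$ is what licenses all of this (it is already in force by the standing assumption of this section and is used to reduce to the case $x=\unit$ via \Cref{Rem:adj-potimes}).
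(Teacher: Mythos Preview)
Your proposal is correct and is exactly the argument the paper gives: apply \Cref{Cor:main-K} with $W=\SpcK$, so $Z=\SpcK$, and take $z=\unit$. The alternative route via \Cref{Cor:main} is equally valid and amounts to the same specialization.
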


\begin{proof}
\Cref{Cor:main-K} for $W=\SpcK$, hence $Z=\SpcK$, and $z=\unit$.
\end{proof}

\begin{Rem}
\label{Rem:big-T}%
Many of our examples of tensor-triangulated categories~$\cat{K}$, if not all, appear as the compact-rigid objects $\cat{K}=\cat{T}^c$ in some compactly-rigidly generated `big' tensor-triangulated category~$\cat{T}$. See~\cite[Hyp.\,0.1]{BalmerKrauseStevenson19}. In that case, we have a \emph{restricted-Yoneda} functor which extends $\yoneda\colon \cat{K}\hook \cat{A}=\MK$ to the whole of~$\cat{T}$:
\[
\xymatrix@C=4em@R=.5em{
\cK=\cat{T}^c \ \ar@{^(->}[r]^-{\yoneda} {}_\vcorrect{.6} \ar@{^(->}[dd]
& \cat{A}\fp=\mK {}_\vcorrect{.6} \ar@{^(->}[dd]
\\
\\
\cat{T}\ \ar@{->}[r]^-{\yoneda}
& \cat{A}=\MK
\\
X \ar@{|->}[r]
& \hat{X}:=\Homcat{T}(-,X)\,.\kern-3em
}
\]
Note that $\yoneda\colon \cat{T}\to \cat{A}$ is not faithful anymore (it kills the so-called \emph{phantom maps}). However, it is faithful for maps \emph{out of} a compact, by the usual Yoneda Lemma, that is, $\Homcat{T}(x,Y)\to \Homcat{A}(\hat x,\hat Y)$ is bijective as soon as $x\in\cat{K}$. We prove in~\cite[Prop.\,A.14]{BalmerKrauseStevenson17app} that every $\hat Y$ remains $\otimes$-flat in~$\MK$, even for $Y\in\cat{T}$ non-compact.

For every homological prime $\cB\in\Spch(\cK)$ we can still compose restricted-Yoneda $\cat{T}\to \cat{A}$ with $Q_\cB\colon \cA\onto \AKB$. We obtain a well-defined homological residue field on the whole `big' category~$\cat{T}$, that we still denote
\[
\boneda_{\cB}\colon \cat{T}\too \AKB\,.
\]
This remains a homological tensor-functor. Compare~\cite[Thm.\,1.6]{BalmerKrauseStevenson19}. Note that we may use these functors to define a support for big objects in~$\cat{T}$, as will be investigated elsewhere.
\end{Rem}

We can finally unpack our nilpotence theorems in that special case:
\begin{Cor}
\label{Cor:nil-big}%
Let $\cat{T}$ be a rigidly-compactly generated `big' tensor-triangulated category and $\cat{K}=\cat{T}^c$ as in \Cref{Rem:big-T}. Let $f\colon x\to Y$ be a morphism in~$\cT$ with $x\in\cK$ compact and $Y$ arbitrary.
\begin{enumerate}[\rm(a)]
\item
\label{it:nil-T-b}%
Suppose that $\boneda_{\cB}(f)=0$ in~$\AKB$ for every homological residue field~$\boneda_{\cB}$ of \Cref{Rem:big-T}, for every homological prime~$\cB\subset\mK$ of \Cref{Def:homol-fields}. Then we have $f\potimes{n}=0$ in~$\cT$ for some $n\ge 1$.
\smallbreak
\item
\label{it:nil-T-a}%
Suppose that $\boneda_{\cB}(f)=0$ in~$\AKB$ for every homological residue field over a closed subset~$W\subseteq\SpcK$ (\Cref{Def:field-above-W}). Then there exists a Thomason closed subset $Z\subseteq\SpcK$ such that $Z\supseteq W$ and such that for every $z\in\cat{K}_Z=\SET{z\in\cK}{\supp(z)\subseteq Z}$ we have $f\potimes{n}\otimes z=0$ for some $n\ge 1$. In particular, this holds for \emph{some} $z$ with $\supp(z)=Z\supseteq W$ (\Cref{Rem:tt}).
\end{enumerate}
\end{Cor}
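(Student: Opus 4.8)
The plan is to reduce \Cref{Cor:nil-big} to the already-proven \Cref{Thm:main} by applying the latter to the restricted-Yoneda image of $f$. The key input from \Cref{Rem:big-T} is that, although $\yoneda\colon\cat{T}\to\cat{A}=\MK$ is no longer faithful, the target object $\hat Y$ remains $\otimes$-flat in $\cat{A}$ by~\cite[Prop.\,A.14]{BalmerKrauseStevenson17app}, and $\yoneda$ \emph{is} faithful on morphisms with compact source by the ordinary Yoneda Lemma. So $f\colon x\to Y$ with $x\in\cat{K}$ yields $\hat f\colon\hat x\to\hat Y$ with $\hat x$ coming from $\cat{K}$ via Yoneda and $\hat Y$ $\otimes$-flat; the hypothesis that $\boneda_{\cB}(f)=0$ in $\AKB$ for every $\cB$ living over $W$ says exactly that $Q_\cB(\hat f)=0$. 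Thus all three hypotheses (i), (ii), (iii) of \Cref{Thm:main} are met for the morphism $\hat f$ in $\cat{A}$.

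For part~\eqref{it:nil-T-a}, I would invoke \Cref{Thm:main}\,\eqref{it:nil-a} to get $s\in\cat{K}$ with $\supp(s)\supseteq W$ and $n\ge 1$ with $\hat f\potimes{n}\otimes\hat s=0$ in $\cat{A}$; setting $Z=\supp(s)$, part~\eqref{it:nil-b} upgrades this to $\hat f\potimes{m}\otimes\hat z=0$ for every $z\in\cat{K}_Z$ (with $m$ depending on $z$). It remains to transport the vanishing from $\cat{A}$ back to $\cat{T}$. Here is the one genuinely new point: the map $\hat f\potimes{m}\otimes\hat z$ is (up to the canonical isomorphism $\widehat{z\otimes x\potimes{m}}\cong\hat z\otimes\hat x\potimes{m}$ and $\widehat{z\otimes Y\potimes{m}}\cong\hat z\otimes\hat Y\potimes{m}$, which hold since $z,x\in\cat{K}$) precisely $\yoneda$ applied to $f\potimes{m}\otimes z\colon z\otimes x\potimes{m}\to z\otimes Y\potimes{m}$, whose source $z\otimes x\potimes{m}$ is compact. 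By faithfulness of $\yoneda$ on maps out of a compact object, $\yoneda(f\potimes{m}\otimes z)=0$ forces $f\potimes{m}\otimes z=0$ in $\cat{T}$. This gives~\eqref{it:nil-T-a}, and the final sentence is the special case $z=s$ with $\supp(s)=Z$.

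Part~\eqref{it:nil-T-b} is then the case $W=\SpcK$: by \Cref{Thm:main} for $W=\SpcK$ (equivalently, by \Cref{Cor:main} applied to $\hat f$) the subset $Z$ of part~\eqref{it:nil-T-a} must equal $\SpcK$, so we may take $z=\unit$, and then $f\potimes{n}\otimes\unit\cong f\potimes{n}=0$ in $\cat{T}$.

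The step I expect to require the most care is the translation in the second paragraph: one must be a little careful that $\yoneda$ commutes with $\otimes$ on the relevant objects (true because $\hat{\ }\colon\cat{T}\to\cat{A}$ is a tensor functor on the compacts and, more to the point, because $x$ and $z$ are compact so the Day-convolution comparison maps $\widehat{z\otimes x\potimes{m}}\to\hat z\otimes\hat x\potimes{m}$ are isomorphisms, cf.~\Cref{Rem:Freyd}), and that tensoring a morphism $f\colon x\to Y$ with a compact object keeps the source compact so that the faithfulness-on-compacts statement of \Cref{Rem:big-T} applies. Everything else is a direct citation of \Cref{Thm:main} and its corollary.
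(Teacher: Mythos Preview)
Your proposal is correct and follows exactly the paper's own argument: apply \Cref{Thm:main} to $\hat f\colon \hat x\to \hat Y$ with $F=\hat Y$ (which is $\otimes$-flat by~\cite[Prop.\,A.14]{BalmerKrauseStevenson17app}), and then use the partial faithfulness of restricted-Yoneda on maps out of compacts to pull the vanishing back to~$\cT$. Your extra care with the monoidal compatibility is a welcome elaboration of what the paper compresses into a single sentence.
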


\begin{proof}
This follows from \Cref{Thm:main} applied to $F=\hat Y$, together with the partial faithfulness of restricted-Yoneda explained in \Cref{Rem:big-T}.
\end{proof}

\bigbreak\goodbreak
\section{Examples}
\label{sec:examples}%
\medbreak

For this section, we keep the setting of \Cref{Rem:big-T}, that is, $\cT$ is a `big' tensor-triangulated category generated by its subcategory~$\cK=\cT^c$ of rigid-compact objects.
\begin{Rem}
\label{Rem:EB}%
We recall some of the tools developed in~\cite[\S\,3]{BalmerKrauseStevenson19}. Let $\cB$ be a proper Serre \tensid\ in~$\cA\fp=\mK$. Picking an injective hull of the unit $\bar\unit$ in the quotient~$\AKB$ yields a (pure-injective) object~$\EB$ in~$\cT$ such that
\begin{equation}
\label{eq:EB-<B>}%
\Locat{B}=\Ker(\hEB\otimes-)=\SET{M\in \MK}{\hEB\otimes M=0}\,.
\end{equation}
In fact the object~$\EB$ is a \emph{weak ring} in~$\cT$, \ie it comes with a map~$\eta_\cB\colon \unit\to \EB$ such that $\EB\otimes\eta_\cB\colon \EB\to \EB\otimes\EB$ is a split monomorphism. A retraction $\EB\otimes\EB\to \EB$ of this monomorphism can be viewed as a (non-associative) multiplication on~$\EB$ for which $\eta_\cB\colon \unit\to \EB$ is a right unit. In any case, one important property of $\eta_\cB$ is that it cannot be $\otimes$-nilpotent, for otherwise $\EB$ would be a retract of zero, hence zero, forcing~$\cB=\cA\fp$.
\end{Rem}

Another important feature of the objects~$\EB$, for~$\cB$ maximal, is the following:
\begin{Prop}
\label{Prop:EB@EB'=0}%
For distinct $\cB\neq\cB'$ in~$\SpchK$ we have $\EB\otimes\EBp=0$.
\end{Prop}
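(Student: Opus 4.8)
The plan is to exploit the defining property~\eqref{eq:EB-<B>} of the objects $\EB$, which says that $\Locat{B}$ is exactly the kernel of tensoring with $\hEB$. The key observation is that $\hEB$ is a module coming (via restricted-Yoneda) from $\cat{T}$, so $\hEB \otimes \EBp$ makes sense and — crucially — $\hEB$ is $\otimes$-flat in $\cat{A}=\MK$. First I would reduce the claim $\EB \otimes \EBp = 0$ in $\cat{T}$ to a claim about modules: since restricted-Yoneda is faithful on morphisms out of compacts and, more to the point, $\EB \otimes \EBp = 0$ in $\cat{T}$ iff $\widehat{\EB \otimes \EBp} = 0$ in $\cat{A}$ — wait, that implication needs care because Yoneda is not faithful on objects of $\cat{T}$. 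So instead I would work directly: it suffices to show $\hEB \otimes \hEBp = 0$ in $\cat{A}=\MK$ AND that this forces $\EB \otimes \EBp = 0$ in $\cat{T}$, which holds because $\widehat{\EB\otimes\EBp} \cong \hEB\otimes\hEBp$ (restricted-Yoneda is monoidal on these objects by the Day-convolution formalism of \Cref{Rem:Freyd}) and a big object $X\in\cat{T}$ with $\hat X = 0$ satisfies $\Homcat{T}(c, X) = 0$ for all compact $c$, hence $X=0$ since $\cat{K}$ generates $\cat{T}$.

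So the heart of the matter is to prove $\hEB \otimes \hEBp = 0$ in $\cat{A}$. Here I would use maximality of both $\cat{B}$ and $\cat{B}'$. Consider the object $N := \hEBp$ in $\cat{A}=\MK$. Tensoring with $\hEB$ is exact (flatness), so $\cat{C} := \SET{M\in\cat{A}\fp}{\hEB \otimes M \in \Locat{B}}\cap\cat{A}\fp$ — more precisely, consider the Serre $\otimes$-ideal $\cat{C}$ of $\mK$ consisting of finitely presented $M$ with $\hEB\otimes M = 0$ in $\cat{A}$; by~\eqref{eq:EB-<B>} applied to finitely presented objects this is $\cat{C} = \Locat{B}\cap\cat{A}\fp = \cat{B}$ (using $\Locat{B}\fp=\cat{B}$ from \Cref{Rem:B}). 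Now $\hEBp$ is a colimit of its finitely presented subobjects, but that alone is not enough; the right move is to observe that $\hEBp \otimes \hEB = 0$ is equivalent to $\hEBp \in \Locat{B} = \Ker(\hEB\otimes-)$, which by symmetry is also equivalent to $\hEB \in \Locat{B'}$. So I must show $\hEBp \in \Locat{B}$, equivalently that $Q_\cB(\hEBp) = 0$ in $\AKB$.

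To see this I would argue with maximality of $\cat{B}'$: the object $\EBp$ is a weak ring with unit $\eta_{\cB'}\colon\unit\to\EBp$ which is not $\otimes$-nilpotent (\Cref{Rem:EB}). Apply $Q_\cB$: if $Q_\cB(\hEBp)\neq 0$, then since $\AKB\fp = (\mK)/\cat{B}$ has only trivial Serre $\otimes$-ideals, the $\otimes$-ideal generated by (the finitely presented subobjects of) $Q_\cB(\hEBp)$ is all of $\AKB\fp$, hence contains $\bar\unit = Q_\cB(\hat\unit)$. Concretely this should force $\bar\unit$ to be a retract of $Q_\cB(\hEBp)\otimes$(something), and then pulling back one gets that $\hEB$ itself lies in $\Locat{B'}$ — but $\hEB\notin\Locat{B'}$ since $\hEB\otimes\hEB\neq 0$ (as $\eta_\cB$ is non-nilpotent, $\hEB$ is not a retract of $0$). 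Actually the cleanest route: $Q_{\cB'}$ applied to everything — since $\hEB\otimes\hEBp$ is what we want to kill, and $Q_{\cB'}(\hEBp)$ is injective-hull-of-unit-ish hence nonzero, if $Q_{\cB'}(\hEB)\neq 0$ too, then by the triviality of Serre $\otimes$-ideals in $\bat A(\cK;\cB')\fp$ one cannot have $Q_{\cB'}(\hEB\otimes\hEBp)=0$ unless one of the factors' finitely-presented approximants is already $0$ there. The real obstacle, and where I'd spend the most effort, is making the colimit/finitely-presented bookkeeping rigorous: $\hEB$ and $\hEBp$ are big (not finitely presented) objects of $\cat{A}$, so "the $\otimes$-ideal they generate" and statements like "$\bar\unit$ is a retract" have to be justified via filtered colimits of their finitely presented subobjects, using that $\AKB$ is locally coherent with $\AKB\fp$ having no proper nonzero Serre $\otimes$-ideals, together with the weak-ring structure forcing retractions at a finite stage. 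Once that is pinned down, $\hEB\in\Locat{B'}$ contradicts $\hEB\otimes\hEB\neq 0$, completing the proof.
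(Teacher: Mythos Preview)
Your reduction to showing $\hEB\otimes\hEBp=0$ in~$\cat{A}$ is correct, and your observation that restricted-Yoneda is conservative on objects (since $\cK$ generates~$\cT$) is exactly how one descends back to~$\cT$. But the heart of your argument has a genuine gap: you never actually establish $\hEBp\in\Locat{B}$. You propose to work in the quotient~$\AKB$ and invoke the simplicity of~$\AKB\fp$, but as you yourself note, $\hEB$ and $\hEBp$ are not finitely presented, and the passage from ``$Q_\cB(\hEBp)\neq0$'' to ``some Serre $\otimes$-ideal of~$\AKB\fp$ is all of~$\AKB\fp$'' to ``$\bar\unit$ is a retract of something'' is not made precise. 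The weak-ring structure you invoke does not obviously give you the retraction you want at a finitely presented stage.

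You are missing the key simplification. Instead of passing to quotients, consider directly the subcategory
\[
\cat{D}:=\Ker(\hEB\otimes\hEBp\otimes-)\cap\cat{A}\fp
\]
of~$\mK$. This is a Serre $\otimes$-ideal because both $\hEB$ and $\hEBp$ are $\otimes$-flat. By~\eqref{eq:EB-<B>} it contains~$\cB$ (since $M\in\cB$ gives $\hEB\otimes M=0$, hence $\hEB\otimes\hEBp\otimes M=0$), and by the same reasoning it contains~$\cB'$. If $\cat{D}$ were proper, then maximality of~$\cB$ and of~$\cB'$ would force $\cB=\cat{D}=\cB'$, contradicting~$\cB\neq\cB'$. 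Hence $\cat{D}=\cat{A}\fp$, so $\hat\unit\in\cat{D}$, \ie $\hEB\otimes\hEBp=0$. This is the paper's argument, and it bypasses all of the colimit bookkeeping that was blocking you.
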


\begin{proof}
This is already in~\cite[Cor.\,4.12]{BalmerKrauseStevenson19}, at least in the local setting. Let us recall the idea, which is easy. In $\cA\fp=\mK$, the kernel $\Ker(\hEB\otimes\hEBp\otimes-)$ is a Serre \tensid\ containing both~$\cB$ and $\cB'$, which are maximal. If that kernel was a proper subcategory it would then be equal to both $\cB$ and~$\cB'$, thus forcing the forbidden~$\cB=\cB'$. So this kernel is not proper, \ie it contains the unit~$\hat\unit$. This reads $\hEB\otimes\hEBp=0$. Now restricted-Yoneda $\yoneda\colon\cT\to \MK$ is a conservative $\otimes$-functor, hence $\EB\otimes \EBp=0$ as claimed.
\end{proof}

Let us now prove the converse to \Cref{Cor:nil-big}.

\begin{Thm}
\label{Thm:converse2nil}%
Let $\cT$ be a `big' tensor-triangulated category with~$\cK=\cT^c$, as in \Cref{Rem:big-T}. Consider a family $\calF\subseteq\SpchK$ of points in the homological spectrum. Suppose that the corresponding functors $\hB\colon \cT\to \AKB$ collectively detect $\otimes$-nilpotence in the following sense: If $f\colon x\to Y$ in~$\cT$ is such that $x\in\cT^c$ and $\hB(f)=0$ for all~$\cB\in\calF$, then $f\potimes{n}=0$ for $n\gg1$. Then we have $\calF=\SpchK$.
\end{Thm}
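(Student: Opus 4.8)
The plan is to prove the contrapositive in the following shape: if $\cB_0\in\SpchK$ is \emph{not} in $\calF$, then I will produce a morphism $f\colon x\to Y$ in $\cT$ with $x\in\cT^c$ that is \emph{not} $\otimes$-nilpotent yet is killed by $\hB$ for every $\cB\in\calF$ (indeed for every $\cB\neq\cB_0$). The natural candidate, given the machinery recalled in \Cref{Rem:EB} and \Cref{Prop:EB@EB'=0}, is the unit map of the weak ring attached to $\cB_0$, namely $\eta_{\cB_0}\colon\unit\to E_{\cB_0}$. Its source $\unit=\unit_\cT$ is compact, so it is an admissible test map.

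First I would check non-nilpotence: by the last sentence of \Cref{Rem:EB}, $\eta_{\cB_0}$ cannot be $\otimes$-nilpotent, since $\eta_{\cB_0}\potimes{n}=0$ would make $E_{\cB_0}$ a retract of $0$ (using that $E_{\cB_0}\otimes\eta_{\cB_0}$ is a split monomorphism, so $E_{\cB_0}\otimes\eta_{\cB_0}\potimes{n}$ is too, whence $E_{\cB_0}\potimes{\,?}$ considerations force $E_{\cB_0}=0$), contradicting properness of $\cB_0$. Second I would check that $\hB(\eta_{\cB_0})=0$ for every $\cB\neq\cB_0$. Apply restricted-Yoneda: $\yoneda(\eta_{\cB_0})\colon\hat\unit\to\hat E_{\cB_0}$, and then $Q_\cB$. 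By \Cref{Prop:EB@EB'=0} we have $E_{\cB}\otimes E_{\cB_0}=0$, hence $\hEB\otimes\hat E_{\cB_0}=0$ in $\MK$ (restricted-Yoneda is monoidal; or invoke the identity $\hEB\otimes\hat E_{\cB_0}=0$ established inside the proof of \Cref{Prop:EB@EB'=0}), so by \eqref{eq:EB-<B>} the object $\hat E_{\cB_0}$ lies in $\Locat{B}$, i.e.\ $Q_\cB(\hat E_{\cB_0})=0$. Therefore $Q_\cB$ of \emph{any} map with target $\hat E_{\cB_0}$ vanishes; in particular $\hB(\eta_{\cB_0})=Q_\cB(\yoneda(\eta_{\cB_0}))=0$. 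Since $\cB_0\notin\calF$, this holds for all $\cB\in\calF$, so $\eta_{\cB_0}$ satisfies the hypothesis of the detection property but is not $\otimes$-nilpotent — contradiction. Hence every $\cB_0\in\SpchK$ lies in $\calF$, i.e.\ $\calF=\SpchK$.

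The only genuinely delicate point is the first step, non-nilpotence of $\eta_{\cB_0}$, and I would make sure to present it cleanly: $\eta_{\cB_0}\potimes{n}\colon\unit\to E_{\cB_0}\potimes{n}$ vanishing, combined with the split monomorphism $E_{\cB_0}\to E_{\cB_0}\otimes E_{\cB_0}\to\cdots$ iterated, exhibits $E_{\cB_0}$ as a retract of $E_{\cB_0}\potimes{n}$ times $\unit\xrightarrow{0}\cdots$, forcing $E_{\cB_0}=0$; but $E_{\cB_0}=0$ forces $\Locat{B_0}=\MK$ by \eqref{eq:EB-<B>}, hence $\cB_0=\cA\fp$, contradicting that $\cB_0$ is a \emph{proper} (indeed maximal) Serre $\otimes$-ideal. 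Everything else is a direct unwinding of definitions together with \Cref{Prop:EB@EB'=0} and the flatness/monoidality of restricted-Yoneda from \Cref{Rem:big-T}. I would also remark that the argument in fact shows something slightly stronger: a single map, $\eta_{\cB_0}$, already witnesses the necessity of the point $\cB_0$, so no family omitting even one homological prime can detect $\otimes$-nilpotence.
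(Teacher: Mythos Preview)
Your proposal is correct and follows essentially the same approach as the paper: both argue by contraposition, picking $\cB_0\notin\calF$ and using the unit map $\eta_{\cB_0}\colon\unit\to E_{\cB_0}$ as the witness, invoking \Cref{Rem:EB} for non-nilpotence and \Cref{Prop:EB@EB'=0} to show the target $E_{\cB_0}$ dies under every $\hB$ with $\cB\in\calF$. The paper is terser (it simply says the target ``goes to zero along all~$\hB$''), while you spell out the intermediate step $\hat E_{\cB_0}\in\Locat{B}$ via~\eqref{eq:EB-<B>}, but the substance is identical.
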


\begin{proof}
Suppose that $\calF\neq\SpchK$. Then there exists $\cB'\in\SpchK$ which does not belong to~$\calF$. By \Cref{Prop:EB@EB'=0} we have $\EB\otimes\EBp=0$ for all~$\cB\in\calF$. Consider the map $\eta_{\cB'}\colon \unit\to \EBp$ as in \Cref{Rem:EB}. We have therefore proved that $\hB(\eta_{\cB'})=0$ for all~$\cB\in\calF$ for the obvious reason that its target object, $\EBp$, goes to zero along all~$\hB$. On the other hand, we have seen in \Cref{Rem:EB} that $\eta_{\cB'}$ cannot be $\otimes$-nilpotent. Hence a proper family~$\calF\subsetneq\SpchK$ cannot detect $\otimes$-nilpotence.
\end{proof}

\begin{Rem}
\label{Rem:converse2nil}%
The proof shows that it is enough to assume the property that the family $\{\hB\}_{\cB\in\calF}$ detects the vanishing of objects~$Y\in \cT$. Compare \Cref{Rem:oddball}.
\end{Rem}

Here is the picture we will observe in several examples:

\begin{Thm}
\label{Thm:phinjective}%
Let $\cT$ be a `big' tensor-triangulated category and $\cK=\cT^c$ as in \Cref{Rem:big-T}. Suppose given for every point $\cP\in\SpcK$ of the triangular spectrum, a coproduct-preserving, homological and (strong) monoidal functor
\[
H_\cP\colon \cT\to \cA_\cP
\]
with values in a tensor-abelian category~$\cA_\cP$ and satisfying the following properties\,:
\begin{enumerate}[\rm(1)]
\item
\label{it:AP}%
For each $\cP$, the target~$\cA_\cP$ is a locally coherent (\Cref{Rem:Freyd}) Grothendieck category with colimit-preserving tensor; the subcategory $\cA_\cP\fp$ of finitely presented objects is \emph{simple} in the sense that its only Serre \tensid{}s are~0 and~$\cA_\cP\fp\neq0$.
\smallbreak
\item
\label{it:HP}%
The functor $H_\cP\colon \cT\to \cA_\cP$ maps compacts to finitely presented: $H_\cP(\cK)\subseteq\cA_\cP\fp$. Furthermore, it maps every $X\in\cT$ to a $\otimes$-flat object in~$\cA_\cP$. Finally the thick $\otimes$-ideal $\Ker(H_\cP)\cap \cK=\SET{x\in\cK}{H_\cP(x)=0}$ is equal to~$\cP$.
\smallbreak
\item
\label{it:detect}%
The family $\{H_\cP\}_{\cP\in\SpcK}$ detects $\otimes$-nilpotence of maps $f\colon x\to Y$ in~$\cT$, with $x\in\cK$ compact: If $H_\cP(f)=0\textrm{ for all }\cP$ then $f\potimes{n}=0\textrm{ for }n\gg1$.
\end{enumerate}
Then the comparison map~$\phi\colon \SpchK\to \SpcK$ of \Cref{Prop:field-to-prime} is a bijection.
\end{Thm}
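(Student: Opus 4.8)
The strategy is to show that each hypothetical functor $H_\cP$ produces a homological prime $\cB_\cP \in \SpchK$ with $\phi(\cB_\cP) = \cP$, which gives a section of $\phi$ over every point; combined with surjectivity of $\phi$ (\Cref{Cor:surj}) and a detection argument this forces $\phi$ to be bijective. First I would construct $\cB_\cP$: restrict $H_\cP$ to $\cK$ and compose with restricted-Yoneda-type universality. Precisely, since $H_\cP\colon\cT\to\cA_\cP$ is homological and monoidal with $H_\cP(\cK)\subseteq\cA_\cP\fp$, its restriction $H_\cP|_\cK\colon\cK\to\cA_\cP\fp$ factors through the Freyd envelope as $H_\cP|_\cK = \bar{H}_\cP\circ\yoneda$ for a unique exact monoidal functor $\bar{H}_\cP\colon\mK\to\cA_\cP\fp$ (universal property of $\yoneda\colon\cK\hook\mK$, \Cref{Rem:Freyd}). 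Set $\cB_\cP:=\Ker(\bar{H}_\cP)\cap\mK$, a Serre $\otimes$-ideal of $\mK$. By hypothesis~\eqref{it:AP} the category $\cA_\cP\fp$ is simple, so since $\bar{H}_\cP$ is (essentially) a Gabriel quotient onto a nonzero simple target, $\cB_\cP$ is a \emph{maximal} Serre $\otimes$-ideal, i.e.\ $\cB_\cP\in\SpchK$. Moreover $\phi(\cB_\cP)=\yoneda\inv(\cB_\cP)=\SET{x\in\cK}{\bar{H}_\cP(\hat x)=0}=\SET{x\in\cK}{H_\cP(x)=0}=\Ker(H_\cP)\cap\cK=\cP$ by hypothesis~\eqref{it:HP}. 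Hence $\cP\mapsto\cB_\cP$ is a section of $\phi$, so $\phi$ is surjective (already known from \Cref{Cor:surj}).

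It remains to prove injectivity of $\phi$. Suppose $\cB,\cB'\in\SpchK$ with $\phi(\cB)=\phi(\cB')=:\cP$; I must show $\cB=\cB'$. The idea is to compare both with the distinguished $\cB_\cP$ using the detection hypothesis~\eqref{it:detect}. Consider the weak-ring object $\EB\in\cT$ attached to $\cB$ (\Cref{Rem:EB}) with its unit map $\eta_\cB\colon\unit\to\EB$; by \Cref{Rem:EB}, $\eta_\cB$ is \emph{not} $\otimes$-nilpotent. By the detection property~\eqref{it:detect}, there must exist some $\cP'\in\SpcK$ with $H_{\cP'}(\eta_\cB)\neq 0$; since $H_{\cP'}(\unit)=\bar\unit\neq 0$ in the simple category $\cA_{\cP'}\fp$, this in particular forces $H_{\cP'}(\EB)\neq 0$. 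Now the key point is to pin down which $\cP'$ can work: $H_{\cP'}(\EB)\neq 0$ should hold only for $\cP'=\phi(\cB)$. To see this, note $\EB$ is characterized by $\Locat{B}=\Ker(\hEB\otimes-)$ in $\MK$ (equation~\eqref{eq:EB-<B>}), and $\bar{H}_{\cP'}$ is the Gabriel quotient $\MK/\Locab{\cB_{\cP'}}$ restricted to finitely presented objects; so $H_{\cP'}(\EB)=\bar{H}_{\cP'}(\hEB)=0$ as soon as $\cB_{\cP'}\supseteq$ the finitely presented part of $\Ker(\hEB\otimes-)=\Locat{B}$, i.e.\ as soon as $\cB_{\cP'}\supseteq\cB$; by maximality of $\cB$ this means $\cB_{\cP'}=\cB$, hence (applying $\phi$) $\cP'=\cP$. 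Contrapositively, $H_{\cP'}(\EB)\neq 0\Rightarrow\cP'=\cP$; combined with the previous paragraph we conclude $\cB=\cB_\cP$. The identical argument gives $\cB'=\cB_\cP$, whence $\cB=\cB'$ and $\phi$ is injective.

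The main obstacle I anticipate is the middle step: controlling exactly for which $\cP'$ one has $H_{\cP'}(\EB)\neq 0$. This requires knowing that each $\bar H_{\cP'}$ is genuinely (equivalent to) the Gabriel quotient $Q_{\cB_{\cP'}}$ — i.e.\ that the exact functor $\bar H_{\cP'}\colon\mK\to\cA_{\cP'}\fp$ onto a simple category with the stated flatness properties \emph{is} the quotient by its kernel — and then transporting the characterization~\eqref{eq:EB-<B>} of $\EB$ through it. One must be careful that $\EB$ lives in the big category $\cT$ while $\cB_{\cP'}$ is a localizing subcategory of the big $\MK$, so the relevant statement is $\hEB\otimes -$ killing $\Locab{\cB_{\cP'}}$, handled via restricted-Yoneda flatness (\Cref{Rem:big-T}) and the generation of $\Locab{\cB}$ by $\cB$. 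A cleaner packaging, which I would adopt if the direct comparison gets fiddly, is to invoke \Cref{Prop:EB@EB'=0} directly: if $\cB\neq\cB'$ then $\EB\otimes\EBp=0$, so $H_{\cP'}(\EB)\otimes H_{\cP'}(\EBp)=0$ in $\cA_{\cP'}$ for all $\cP'$; by the nilpotence-detection argument applied to $\eta_\cB$ and to $\eta_{\cB'}$ one finds $\cP_1,\cP_2$ with $H_{\cP_i}$ not killing the respective unit maps, hence not killing $\EB$ resp.\ $\EBp$; showing $\cP_1=\cP_2=\cP$ then contradicts $\EB\otimes\EBp=0$ via the simplicity and flatness of the target. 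Either route reduces injectivity to the single clean fact that a non-$\otimes$-nilpotent map cannot die under all $H_\cP$, which is hypothesis~\eqref{it:detect}.
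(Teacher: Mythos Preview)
Your overall shape---build $\cB_\cP$ from $H_\cP$, check $\phi(\cB_\cP)=\cP$, then argue injectivity---matches the paper. But there is a genuine gap at the step you pass over most quickly: \emph{maximality of $\cB_\cP$}. You write that ``$\bar{H}_\cP$ is (essentially) a Gabriel quotient onto a nonzero simple target'', but nothing in the hypotheses guarantees that $\bar H_\cP$ is full or essentially surjective onto~$\cA_\cP\fp$, and a faithful exact monoidal functor into a simple category does \emph{not} force the source to be simple. Concretely, given $\cB'\supsetneq\cB_\cP$ and $M\in\cB'\setminus\cB_\cP$, the Serre \tensid\ of~$\cA_\cP\fp$ generated by~$\bar H_\cP(M)$ is indeed all of~$\cA_\cP\fp$ by simplicity, but there is no way to pull this back and conclude~$\hat\unit\in\cB'$. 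The paper's proof of maximality is substantive and uses the big extension $\hat H_\cP\colon\MK\to\cA_\cP$ together with the weak rings~$\EB,\EBp$: from $\hEBp\otimes M=0$ and $\hat H_\cP(M)\neq 0$ one gets (via flatness of~$H_\cP(\EBp)$ and simplicity) that $H_\cP(\EBp)=0$, hence $\hEB\otimes\hEBp=0$; then the split mono $\EB\otimes\eta_\cB$ forces~$\hEBp=0$, i.e.\ $\cB'=\cA\fp$. This argument is not hinted at in your proposal.

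Your injectivity argument is also off. The implication you use, ``$\cB_{\cP'}\supseteq\cB\Rightarrow H_{\cP'}(\EB)=0$'', is false already when $\cB_{\cP'}=\cB$ (it would give $\hEB\in\Locat{B}$, hence $\EB\otimes\EB=0$, hence $\EB=0$). The correct equivalence, obtained from~\eqref{eq:EB-<B>} and \Cref{Prop:EB@EB'=0}, is $H_{\cP'}(\EB)=0\Leftrightarrow \cB_{\cP'}\neq\cB$; but to even state this one needs $\Ker(\hat H_{\cP'})=\Locat{\cB_{\cP'}}$, which already presupposes maximality of~$\cB_{\cP'}$. Your ``cleaner packaging'' has the same lacuna: you never justify $\cP_1=\cP_2=\cP$. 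The paper bypasses all of this: once maximality is in hand and $H_\cP$ factors as $\bar H_\cP\circ\boneda_{\cB(\cP)}$, hypothesis~\eqref{it:detect} transfers to the family $\{\boneda_{\cB(\cP)}\}_{\cP}$, and \Cref{Thm:converse2nil} immediately gives $\{\cB(\cP):\cP\in\SpcK\}=\SpchK$. So the section $\cP\mapsto\cB(\cP)$ is \emph{surjective}, and a surjection with a surjective section is a bijection.
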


\begin{proof}
The core of the proof amounts to the kernel of each~$H_\cP$ defining an element of~$\SpchK$. More precisely, let us fix~$\cP\in\SpcK$ for the moment and denote by
\[
\hat H_\cP\colon \cA\too \cA_\cP
\]
the unique coproduct-preserving \emph{exact} functor that extends~$H_\cP$ to~$\cA=\MK$, that is, such that the following diagram commutes\,:
\begin{equation}
\label{eq:hatH_P}%
\vcenter{\xymatrix{
\cT\ar[rd]_-{H_\cP} \ar[r]^-{\yoneda}
& \cA=\MK \ar[d]^-{\hat H_\cP}
\\
& \cA_\cP\,.\!\!
}}
\end{equation}
The existence of such an~$\hat H_\cP$ was established by Krause~\cite[Cor.\,2.4]{Krause00}. It is not hard to show that $\hat H_\cP$ is also monoidal. At the very least, for every $M\in\cA$, we can find~$g\colon Y\to Z$ in~$\cT$ such that $M=\im(\hat g)$ and then exactness of~$\hat H_\cP$ gives
\begin{equation}
\label{eq:HP-tens}%
\hat H_\cP(\hat X\otimes M)\cong
H_\cP(X)\otimes \hat H_\cP(M)
\end{equation}
for every $X\in\cT$ and~$M\in\cA$. Consider now the kernel of~$\hat H_\cP$ in~$\cA$. The exact functor $\hat H_\cP\colon \cA\to \cA_\cP$ preserves coproducts and finitely presented objects because of~\eqref{it:HP}. It then follows by a general property of abelian categories that $\Ker(H_\cP)$ is generated by its finitely presented objects (see~\cite[Prop.\,A.6]{BalmerKrauseStevenson19}). In other words, if we define a Serre \tensid\ of~$\cA\fp$ as follows
\[
\cB(\cP):=\Ker(\hat H_\cP)\cap \cA\fp=\SET{M\in\mK}{\hat H_\cP(M)=0}
\]
then we have $\Ker(\hat H_\cP)=\Locat{B(\cP)}$. The fact that $\cA_\cP\neq0$ tells us that $\cB:=\cB(\cP)$ is proper. We claim that it is maximal. Let $\cB'$ be a strictly larger Serre \tensid
\[
\cB=\cB(\cP)\subsetneq\cB'\subseteq\cA\fp.
\]
We want to prove that $\cB'=\cA\fp$. Choose $M\in\cB'$ which does not belong to~$\cB$. Let us now invoke the objects~$\EB$ and $\EBp$ of~$\cT$, as in \Cref{Rem:EB}, so that
\[
\Locat{B}=\Ker(\hEB\otimes-)\qquad\textrm{and}\qquad\Locat{B'}=\Ker(\hEBp\otimes-)\,.
\]
We then have $\hEBp\otimes M=0$ because $M\in \cB'$. Hence by~\eqref{eq:HP-tens}, we have $H_\cP(\EBp)\otimes \hat H_\cP(M)=0$. On the other hand, we have $\hat H_\cP(M)\neq0$ because~$M\notin\cB$. Since $H_\cP(\EBp)$ is $\otimes$-flat in~$\cA_\cP$, we can consider the Serre \tensid\
\[
\Ker(H_\cP(\EBp)\otimes-)\cap \cA_\cP\fp
\]
of~$\cA_\cP\fp$. We just proved that it contains a non-zero object, namely $\hat H_\cP(M)$. By the `simplicity' of~$\cA_\cP\fp$, we get that $\Ker(H_\cP(\EBp)\otimes-)\cap\cA\fp$ must be the whole of~$\cA_\cP\fp$. This means that $H_\cP(\EBp)=0$, or in other words, $\hEBp\in\Ker(\hat H_\cP)=\Locat{\cB}=\Ker(\hEB\otimes-)$. We have thus proved
\begin{equation}
\label{eq:aux-EB@EB'}%
\hEB\otimes\hEBp=0\,.
\end{equation}
Consider now the exact sequence in~$\cA$ associated to the morphism~$\eta_\cB\colon \unit\to \EB$
\[
0\to I_{\cB} \to \hat \unit \xrightarrow{\hat\eta_{\cB}} \hEB\,.
\]
Since $\EB\otimes\eta_\cB$ is a split monomorphism, we have $I_{\cB}\in\Ker(\hEB\otimes-)=\Locat{B}\subseteq\Locat{B'}$ and therefore $I_\cB\otimes \hEBp=0$. Combined with~\eqref{eq:aux-EB@EB'} we see from the above exact sequence that $\hat \unit$ is also killed by~$\hEBp$, that is $\hEBp=0$, or $\cB'=\cA\fp$ as claimed.

In summary, we have now shown that $\cB(\cP)=\Ker(\hat H_\cP)\cap \cA\fp$ belongs to the homogeneous spectrum~$\SpchK$. By the last assumption in~\eqref{it:HP}, we see that $\phi(\cB(\cP))=\cP$. Finally, we need to relate the functor~$H_\cP$ with the homological residue field~$\hB$ for $\cB:=\cB(\cP)$. This is now easy. From~\eqref{eq:hatH_P} and the fact that $\Locat{B}=\Ker(\hat H_\cP)$ we can further factor~$\hat H_\cP$ by first modding out this kernel. We obtain a unique exact functor $\bar H_\cP\colon \AKB\to \cA_\cP$ making the right-hand triangle in the following diagram commute\,:
\[
\vcenter{\xymatrix{
\cT\ar[rd]_-{H_\cP} \ar[r]^(.6){\yoneda} \ar@/^2em/[rr]^-{\hB}
& \cA \ar[d]^-{\hat H_\cP} \ar@{->>}[r]^-{Q_\cB}
& \AKB=\mK/\Locat{B} \ar[ld]^-{\bar H_\cP}
\\
& \cA_\cP\,.\!\!
}}
\]
The left-hand triangle was already in~\eqref{eq:hatH_P}. The top `triangle' commutes by definition of~$\hB\colon \cT\to \AKB$. Expanding the notation $\cB=\cB(\cP)$, we have factored each $H_\cP\colon \cT\to \cA_\cP$ via a homological residue field~$\boneda_{\cB(\cP)}$ as follows
\[
H_\cP=\bar H_{\cP}\circ\boneda_{\cB(\cP)}.
\]

We now claim that the family
\[
\calF:=\SET{\cB(\cP)}{\cP\in\SpcK}
\]
satisfies the hypothesis of \Cref{Thm:converse2nil}, in other words that the family of functors $\{\boneda_{\cB(\cP)}\}_{\cP\in\SpcK}$ detects $\otimes$-nilpotence of maps $f\colon x\to Y$ in~$\cat{T}$ with $x\in\cK$ compact. Indeed if $\boneda_{\cB(\cP)}(f)=0$ then $H_\cP(f)=\bar H_{\cP}\circ \boneda_{\cB(\cP)}(f)=0$ by the above factorization. If this holds for all~$\cP$, we conclude by~\eqref{it:detect} that $f\potimes{n}=0$ for~$n\gg1$. So \Cref{Thm:converse2nil} tells us that this family $\calF=\SET{\cB(\cP)}{\cP\in\SpcK}$ is the whole~$\SpchK$.

In conclusion, we have constructed a set-theoretic section
\[
\sigma\colon \SpcK\to \SpchK, \qquad \cP\mapsto \cB(\cP)
\]
of $\phi\colon \SpchK\onto \SpcK$ and we just proved that $\im(\sigma)=\calF=\SpchK$. In other words, the surjection $\phi$ admits a surjective section, \ie~$\phi$ is a bijection.
\end{proof}

We can now use known nilpotence-detecting families in examples, to prove that $\phi\colon \SpchK\to \SpcK$ is bijective.

\begin{Cor}
\label{Cor:SH}%
Let $\cT=\SH$ be the stable homotopy category and $\cK=\SH^c$. Then $\phi\colon \SpchK\to \SpcK$ is a bijection. More generally, let $G$ be a compact Lie group and $\cT=\SH(G)$ the $G$-equivariant stable homotopy category of genuine $G$-spectra, and $\cK=\SH(G)^c$. Then $\phi\colon \SpchK\to \SpcK$ is a bijection.
\end{Cor}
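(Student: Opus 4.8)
The plan is to deduce this from \Cref{Thm:phinjective}: it suffices to produce, for every triangular prime $\cP\in\SpcK$, a coproduct-preserving, homological and (strong) monoidal functor $H_\cP\colon\cT\to\cA_\cP$ into a locally coherent tensor-Grothendieck category satisfying conditions~\eqref{it:AP}--\eqref{it:detect} of that theorem. For $\cT=\SH$ these functors will be the Morava $K$-theories.

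The input is the computation of $\Spc(\SH^c)$ in~\cite{Balmer05a}, which itself rests on the Hopkins--Smith thick subcategory theorem~\cite{HopkinsSmith98} and hence on the Nilpotence Theorem~\cite{DevinatzHopkinsSmith88}: every triangular prime of $\SH^c$ is the kernel on compacts of the homology theory represented by a Morava $K$-theory, where the latter ranges over the rational Eilenberg--MacLane spectrum $H\bbQ$, the finite-height Morava $K$-theories $K(p,n)$ for primes $p$ and $n\ge1$, and the mod-$p$ Eilenberg--MacLane spectra $H\bbF_p$. Given $\cP\in\SpcK$, let $E=E(\cP)$ be the corresponding spectrum, let $\cA_\cP$ be the category of $\mathbb{Z}$-graded $E_*$-modules with the graded tensor product over the graded field $E_*$, and put $H_\cP:=E_*(-)\colon\SH\to\cA_\cP$. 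I would then check the three conditions. For~\eqref{it:AP}: the category $\cA_\cP$ is a locally noetherian (in particular locally coherent) Grothendieck category with colimit-preserving tensor, and since $E_*$ is a graded field every finitely presented object of $\cA_\cP$ is a finite direct sum of shifts of the unit~$E_*$, so the only Serre $\otimes$-ideals of $\cA_\cP\fp$ are $0$ and~$\cA_\cP\fp$. For~\eqref{it:HP}: $E_*(-)$ is a homology theory, hence homological and coproduct-preserving; it is strong monoidal by the K\"unneth isomorphism over the field~$E_*$; it carries finite spectra to finite-dimensional, hence finitely presented, graded $E_*$-modules; it carries an arbitrary spectrum to a graded $E_*$-module, which is automatically $\otimes$-flat because every module over a (graded) field is free; and $\Ker(H_\cP)\cap\SH^c=\cP$ is exactly what the computation of $\Spc(\SH^c)$ records. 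Finally~\eqref{it:detect} is the `map form' of the Nilpotence Theorem of Devinatz--Hopkins--Smith~\cite{DevinatzHopkinsSmith88,HopkinsSmith98}: a morphism $f\colon x\to Y$ with $x$ finite is $\otimes$-nilpotent as soon as $E_*(f)=0$ for every Morava $K$-theory and for $H\bbQ$. Then \Cref{Thm:phinjective} applies and $\phi$ is a bijection.

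For $\cT=\SH(G)$ with $G$ a compact Lie group I would run the same argument using the known description of $\Spc(\SH(G)^c)$ (Balmer--Sanders for finite $G$, with the compact Lie case treated afterwards): every triangular prime has the shape $\SET{x\in\SH(G)^c}{E_*(\Phi^H x)=0}$ for a closed subgroup $H\le G$ taken up to conjugacy and a Morava $K$-theory $E$ as above, $\Phi^H\colon\SH(G)\to\SH$ denoting the geometric fixed-point functor. For such a prime one takes $H_\cP:=E_*\bigl(\Phi^H(-)\bigr)$, the composite of $\Phi^H$---which is coproduct-preserving, (strong) monoidal, homological and compactness-preserving---with the non-equivariant $E_*(-)$ built above; then~\eqref{it:AP} and~\eqref{it:HP} are inherited (the kernel-on-compacts condition being, once more, the spectrum computation), and~\eqref{it:detect} becomes the equivariant Nilpotence Theorem: from $E_*(\Phi^H f)=0$ for all $(H,E)$ one infers that each $\Phi^H f$ is $\otimes$-nilpotent in~$\SH$ by the non-equivariant theorem, and hence that $f$ is $\otimes$-nilpotent in~$\SH(G)$. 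Again \Cref{Thm:phinjective} gives bijectivity of~$\phi$.

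The verifications of~\eqref{it:AP} and of the flatness, finite-presentation and monoidality parts of~\eqref{it:HP} are routine graded-field linear algebra together with K\"unneth; the substantive ingredients are the Nilpotence Theorem behind~\eqref{it:detect} and, through Hopkins--Smith and the Balmer-spectrum computations, the kernel-on-compacts condition. The step I expect to be the main obstacle is the equivariant case of~\eqref{it:detect} for non-finite $G$: passing from $\otimes$-nilpotence of all the geometric fixed points $\Phi^H f$ to $\otimes$-nilpotence of $f$ is purely formal for finite $G$---take the maximum of finitely many exponents and invoke joint conservativity of $\{\Phi^H\}$---but a general compact Lie group can have infinitely many conjugacy classes of closed subgroups, so this passage genuinely needs the equivariant nilpotence theorem as an external input rather than a formal corollary of the non-equivariant one.
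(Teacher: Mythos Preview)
Your proposal is correct and follows essentially the same route as the paper: apply \Cref{Thm:phinjective} with the Morava $K$-theories (and, equivariantly, their composites with geometric fixed-point functors) as the family $\{H_\cP\}$, invoking the classical and equivariant Nilpotence Theorems for condition~\eqref{it:detect}. One terminological quibble: $\Phi^H\colon\SH(G)\to\SH$ is an exact (triangulated) functor rather than a homological one, but the composite $E_*\circ\Phi^H$ is homological, which is what you need; the paper cites~\cite{BarthelGreenleesHausmann18pp} for precisely the compact-Lie nilpotence input you correctly flag as non-formal.
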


\begin{proof}
In the case of~$\cT=\SH$, this relies on~\cite{DevinatzHopkinsSmith88,HopkinsSmith98}. As explained in~\cite[\S\,9]{Balmer10b}, the spectrum consists of points~$\cP(p,n)$ for each prime number~$p$ and each `chromatic height' $1\le n\le\infty$, with the collision $\cP(0):=\cP(p,1)=\SH^{\mathrm{tor}}$ for all~$p$. This prime~$\cP(0)$ is the kernel of rational homology $H\bbQ\otimes-\colon \SH^c\to \Db(\bbQ)\cong\bbQ\GGrmod$. The other primes~$\cP(p,n)$ for $n\ge 2$ are given as the kernels of Morava $K$-theory~$K(p,n-1)_\sbull$, which are homological functors
\[
K(p,n)_\sbull\colon \SH\to \cA_{p,n}:=\bbF_p[v_n^{\pm1}]\GGrmod
\]
for $1\le n<\infty$ with $v_n$ of degree~$2(p^n-1)$, and $K(p,\infty)_\sbull\colon \SH\to \bbF_p\GGrmod$ is mod-$p$ homology. The target categories are graded modules over (graded) fields and the Morava $K$-theories satisfy K\"unneth formulas, which amounts to say that they are monoidal functors when $\cA_{p,n}$ is equipped with the graded tensor product. See~\cite{Ravenel92}. The reader can now verify Conditions~\eqref{it:AP}--\eqref{it:detect} of \Cref{Thm:phinjective}. The crucial~\eqref{it:detect} is the original Nilpotence Theorem~\cite{DevinatzHopkinsSmith88}.

For $\cT=\SH(G)$ and $\cK=\SH(G)^c$, the description of the set~$\SpcK$ and the relevant nilpotence theorem was achieved for finite groups in~\cite{BalmerSanders17}, and more recently for arbitrary compact Lie groups in~\cite{BarthelGreenleesHausmann18pp}. Specifically, there is exactly one prime~$\cP(H,p,n)=(\Phi^H)\inv(\cP(p,n))$ in~$\SpcK$ for every conjugacy class of closed subgroups~$H\le G$ and for every `chromatic' prime~$\cP(p,n)$ as above; here $\Phi^H\colon \SH(G)\to \SH$ is the geometric $H$-fixed point functor, which is tensor-triangulated. The relevant homology theories are simply these $\Phi^H$ composed with the non-equivariant Morava $K$-theories. So Conditions~\eqref{it:AP}--\eqref{it:HP} in \Cref{Thm:phinjective} are easy to verify. The relevant nilpotence theorem giving us~\eqref{it:detect} can be found in~\cite[Thm.\,3.12]{BarthelGreenleesHausmann18pp} (or \cite[Thm.\,4.15]{BalmerSanders17} for finite groups, a result also obtained earlier by N.\ Strickland).
\end{proof}

\begin{Cor}
\label{Cor:Dperf}%
Let $X$ be a quasi-compact and quasi-separated scheme and $\cT=\Der(X)$ the derived category of $\calO_X$-modules with quasi-coherent homology. Here $\cK=\Dperf(X)$ is the category of perfect complexes, the spectrum $\SpcK\cong |X|$ is the underlying space of~$X$ and the map~$\phi\colon\SpchK\to \SpcK$ is a bijection.
\end{Cor}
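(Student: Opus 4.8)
The plan is to apply \Cref{Thm:phinjective} to the case $\cT=\Der(X)$ and $\cK=\Dperf(X)$, producing for each point $\cP\in\SpcK\cong|X|$ a suitable homological $\otimes$-functor $H_\cP\colon\Der(X)\to\cA_\cP$. The natural candidate is residue-field homology: under the homeomorphism $\SpcK\cong|X|$ of~\cite{Balmer05a}, a triangular prime $\cP$ corresponds to a scheme point $x\in X$ with residue field $\kappa(x)$, and one sets $H_\cP := (-)\otimes^{\mathbb{L}}_{\calO_X}\kappa(x)$, or more precisely the composite of pullback along $\Spec\kappa(x)\to X$ with taking homology, landing in $\cA_\cP:=\kappa(x)\GGrmod$, the category of $\mathbb{Z}$-graded $\kappa(x)$-vector spaces. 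First I would record that $\cA_\cP$ is a locally coherent Grothendieck category with colimit-preserving graded tensor product whose finitely presented objects are the finite-dimensional graded vector spaces, and that this subcategory is simple: any nonzero Serre $\otimes$-ideal contains some nonzero graded vector space, hence (being $\otimes$-ideal and Serre, using the degree shifts $\kappa(x)[n]$) contains the unit, so equals everything. That gives hypothesis~\eqref{it:AP}.

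Next I would verify~\eqref{it:HP}. The functor $H_\cP$ is coproduct-preserving, homological, and strong monoidal because pullback is and because $\kappa(x)$ is a field so the K\"unneth formula holds. It sends a perfect complex to a complex of finite-dimensional $\kappa(x)$-vector spaces, hence to a finitely presented graded object; and every object of $\Der(X)$ goes to a $\otimes$-flat object of $\kappa(x)\GGrmod$ since over a field every graded module is flat. Finally, $\Ker(H_\cP)\cap\cK=\SET{P\in\Dperf(X)}{P\otimes^{\mathbb{L}}\kappa(x)=0}$; by Nakayama-type support theory for perfect complexes this is exactly the set of perfect complexes whose support misses $x$, which under $\SpcK\cong|X|$ is precisely the prime $\cP$. (This is the content of the Thomason--Neeman classification, or can be extracted directly from~\cite{Balmer05a} and~\cite{Thomason97}.)

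For hypothesis~\eqref{it:detect} I would invoke Thomason's nilpotence theorem~\cite[Thm.\,3.8]{Thomason97}, in the form: a map $f\colon x\to Y$ in $\Der(X)$ with $x$ perfect that dies after $\otimes^{\mathbb{L}}\kappa(x)$ for every $x\in X$ is $\otimes$-nilpotent. Strictly speaking Thomason states this for maps in $\Dperf(X)$, so I would reduce the big-target case to that, e.g.\ by the argument packaged in \Cref{Thm:main}: $\hat Y$ is $\otimes$-flat in $\MK$ and the hypotheses of that theorem (with $W=\SpcK$) are met once one knows each $H_\cP$ factors through some homological residue field --- but that is circular here, so instead I would quote the standard extension of Thomason's theorem to maps out of a compact with arbitrary target, which follows by writing $Y$ as a homotopy colimit of perfect complexes and using that $x$ is compact together with the Serre-ideal argument of \Cref{Thm:main}\eqref{it:nil-a}. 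With all three hypotheses in place, \Cref{Thm:phinjective} yields that $\phi\colon\SpchK\to\SpcK$ is a bijection; surjectivity was already known from \Cref{Cor:surj}, and the identification $\SpcK\cong|X|$ is~\cite[Thm.\,6.3]{Balmer05a}. The main obstacle is hypothesis~\eqref{it:detect}: making sure Thomason's nilpotence theorem is available in exactly the form required, i.e.\ for maps with compact source and arbitrary target in the big category $\Der(X)$, rather than only for morphisms of perfect complexes; for quasi-compact quasi-separated $X$ this is the content of~\cite{Thomason97} combined with standard compact-generation arguments, but it is the one point that genuinely uses input from algebraic geometry rather than formal nonsense.
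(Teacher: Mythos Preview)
Your proposal is correct and follows exactly the paper's approach: apply \Cref{Thm:phinjective} with the residue-field functors $\kappa(x)\otimes^{\mathrm{L}}_{\calO_X}-\colon\Der(X)\to\kappa(x)\GGrmod$ and invoke Thomason's nilpotence theorem for hypothesis~\eqref{it:detect}. The paper simply cites \cite[Thm.\,3.6]{Thomason97} without further comment on the big-target issue you flag; your caution there is reasonable but does not represent a different route.
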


\begin{proof}
The homological functors of \Cref{Thm:phinjective} are simply the classical residue fields $\kappa(x)\otimes_{\calO_X}^{\mathrm{L}}-\colon \Der(X)\to \Der(\kappa(x))\cong\kappa(x)\GGrmod$ at each $x\in |X|$, where of course $\kappa(x)$ is the residue fields of the local ring~$\calO_{X,x}$. Here, the relevant nilpotence theorem is due to Thomason~\cite[Thm.\,3.6]{Thomason97}.
\end{proof}

\begin{Rem}
\label{Rem:oddball}%
There is a simpler proof of the above when $X$ is noetherian, following the pattern of the next example. It is worth noting that when $X$ is not noetherian, even for $|X|=\ast$, the residue fields do not detect vanishing of objects. See~\cite{Neeman00}.
\end{Rem}

\begin{Exa}
\label{Exa:Stab}%
Let $G$ be a finite group scheme over a field~$k$ and $\cT=\Stab(kG)$ the category of $k$-linear representations of~$G$ modulo projectives. See~\cite{BensonIyengarKrausePevtsova18}. Here $\cK=\stab(kG)$ is the stable category of finite-dimensional representations modulo projectives, $\SpcK\cong\Proj(\mathrm{H}^\sbull(G,k))$ is the so-called projective support variety, and the map $\phi\colon \SpchK\to \SpcK$ is again bijective. The method of proof is different, for there is no known homology theories capturing the points of~$\SpcK$ \emph{and satisfying a K\"unneth formula}. Indeed, points are detected by equivalence classes of so-called \emph{$\pi$-points}, following~\cite{FriedlanderPevtsova07}, but these functors are \emph{not} monoidal!

Instead, we can use the fact that localizing subcategories of~$\cT$ are classified by subsets of~$\SpcK$ in this case, a non-trivial result that can be found in~\cite[\S\,10]{BensonIyengarKrausePevtsova18}. In such situations, the property $\EB\otimes\EBp=0$ isolated in \Cref{Prop:EB@EB'=0}, for $\cB\neq\cB'$ in~$\SpchK$ can be used to show that $\phi\colon \SpchK\to \SpcK$ is injective. Indeed, if~$\phi(\cB)=\phi(\cB')=:\cP$, we can use minimality of the localizing category~$\cT_{\cP}$ supported at the point~$\cP$ to show that $\EB\otimes\EBp=0$ forces $\EB=0$ or~$\EBp=0$ which is absurd. This argument can already be found in~\cite[Cor.\,4.26]{BalmerKrauseStevenson19}.
\end{Exa}

\begin{Rem}
\label{Rem:stab-nil}%
We note that the above does \emph{not} use a nilpotence theorem for $\cT=\Stab(kG)$. To the best of the author's knowledge, there is no such result in the literature, the probable reason being that $\pi$-points (or shifted cyclic subgroups) are not monoidal. Thanks to the present work, we now know that there exists for every point $\cP\in\Spc(\stab(kG))\cong\Proj(\mathrm{H}^\sbull(G,k))$ a unique homological \emph{tensor} functor
\[
\boneda_{\cB(\cP)}\colon \Stab(kG)\to \bat{A}(\cat{K};\cat{B}(\cP))
\]
to a `simple' Grothendieck tensor category (in the sense of~\eqref{it:AP} in~\Cref{Thm:phinjective}), whose kernel on compacts~$\cK=\cT^c=\stab(kG)$ is exactly~$\cP$. And we know that the family $\{\boneda_{\cB(\cP)}\}_{\cP\in\SpcK}$ detects tensor-nilpotence.
\end{Rem}

\begin{Rem}
In view of the avalanche of examples where $\phi\colon \SpchK\to \SpcK$ is a bijection, it takes nerves of steel not to conjecture that this property holds for all tensor-triangulated categories.
\end{Rem}


\end{document}